\newtheorem{theorem}{Theorem}[section]    
\newtheorem{lemma}[theorem]{Lemma}          
\newtheorem{proposition}[theorem]{Proposition}  
\newtheorem{corollary}[theorem]{Corollary} 
\theoremstyle{definition}
\newtheorem{remark}[theorem]{Remark}
\newtheorem{convention}[theorem]{Convention}
\def\co{\colon \thinspace}
\newcommand{\GL}{\textrm{GL}}
\newcommand{\Z}{\mathbb{Z}}
\newcommand{\R}{\mathbb{R}}
\newcommand{\Q}{\mathbb{Q}}
\newcommand{\C}{\mathbb{C}}
\newcommand{\mL}{\mathbb{L}}
\newcommand{\mcL}{\mathcal{L}}
\newcommand{\sltwo}{\mathfrak{sl}_{2}}
\newcommand{\mU}{\mathcal{U}}
\newcommand{\sq}{\mathsf{q}}
\newcommand{\st}{\mathsf{t}}
\newcommand{\tC}{\widetilde{C}}
\newcommand{\bF}{\mathbb{F}}
\newcommand{\be}{\mathbf{e}}
\newcommand{\bk}{\mathbf{k}}
\newcommand{\bz}{\mathbf{z}}
\newcommand{\bS}{\mathbb{S}}
\newcommand{\fg}{\mathfrak{g}}
\title[Reading the dual Garside length]{Reading the dual Garside length of braids from homological and quantum representations}
\author{Tetsuya Ito}
\address{Department of Mathematics,
The University of British Columbia, 1984 Mathematics Road
Vancouver, B.C, Canada V6T 1Z2}
\email{tetitoh@math.ubc.ca}
\urladdr{http://ms.u-tokyo.ac.jp/~tetitoh/}
\subjclass[2010]{Primary~20F36 
, Secondary~20F10,57M07,20G42}
\keywords{Quantum group, Lawrence's representation, Braid group, Dual Garside length}
\begin{document}

\begin{abstract} 
We show that Lawrence's representation and linear representations from $U_{q}(\sltwo)$ called generic highest weight vectors detect the dual Garside length of braids in a simple and natural way. That is, by expressing a representation as a matrix over a Laurent polynomial ring using certain natural basis, the span of the variable is equal to the constant multiples of the dual Garside length.
\end{abstract}
\maketitle

\section{Introduction}

The braid group $B_{n}$ possesses nice combinatorial structures, called {\em Garside structures}. A Garside structure allows us to solve the word problem and the conjugacy problem, and gives an efficient method to compute the length function for some generating sets called {\em simple elements}. In this paper, we restrict our attention to one of the standard Garside structure of the braid groups found by Birman-Ko-Lee in \cite{bkl}, known as the {\em dual Garside structure}. We study a relationship between the {\em dual Garside length}, the length function associated to the dual Garside structure, and linear representations of braid groups. 

In \cite{iw}, the author and Wiest proved that one of the most famous linear representation, the {\em Lawrence-Krammer-Bigelow representation} (the LKB representation, in short) detects the dual Garside length in a simple and natural way, as was conjectured by Krammer \cite{kra1}.

The aim of this paper is to generalize this result for two infinite families of braid representations derived from different constructions. One family of representations is called {\em Lawrence's representations} constructed by Lawrence in \cite{law} in a topological method. The other family is derived from the quantum group $U_{q}(\sltwo)$, and called {\em generic highest weight vectors}. 
We will prove that these representations also detect the dual Garside length in a simple and natural way as the LKB representation does.

Let us explain what the phrase {\em ``in a simple and natural way''} means.
Let $R$ be a ring and $a \in R[x^{\pm 1}]$ be a Laurent polynomial of the variable $x$ whose coefficient is in $R$. We denote the maximal and the minimal degree of the variable $x$ by $M_{x}(a)$ and $m_{x}(a)$, respectively.
For an $(N \times M)$-matrix $A =(a_{\iota,\kappa})$ over $R[x^{\pm 1}]$, we define
\[ M_{x}(A)=\max_{ \iota,\kappa } \{ M_{x}(a_{\iota,\kappa})\}, \;\;\;\; m_{x}(A) = \min_{ \iota,\kappa}\{ m_{x}(a_{\iota,\kappa})\} . \] 

Now consider a linear representation 
\[ \rho: B_{n} \rightarrow \GL( V ). \]
where $V$ is a free $R[x^{\pm 1}]$-module.
There are many known constructions of such linear representations of braid groups. In many cases, from a construction of the linear representation $\rho$ one can find a certain natural basis $\{v_{i}\}_{i=1,2,\ldots,N}$ of $V$. By using the basis $\{v_{i}\}$, we express the representation $\rho$ as a matrix representation
\[ \rho^{V} \co B_{n} \rightarrow \GL( N ; R[x^{\pm 1}] ). \]
So for each braid $\beta$ we get an $(N \times N)$-matrix $\rho^{V}(\beta)$ over $R[x^{\pm 1}]$.
We say the representation $\rho^{V}$ {\em naturally} and {\em directly detects} the dual Garside length if the supremum and the infimum of $\beta$ are some constant multiples of $m_{x}=m_{x}(\rho^{V}(\beta))$ and $M_{x}=M_{x}(\rho^{V}(\beta))$, respectively. Here the supremum and infimum of braids are integers derived from the dual Garside structure which will compute the dual Garside length. Roughly speaking, $\rho^{V}$ naturally and directly detects the dual Garside length means that the dual Garside length is equal to some constant multiples of $M_{x}-m_{x}$, the span of the variable $x$.

Our results give a rather surprising and strong connection between linear representations and the dual Garside structure. Assume that we used other basis $\{v'_{i}\}$ of $V$. Then it is still true that one can compute the dual Garside length from the matrix expression of $\rho(\beta)$ with respect to the basis $\{v'_{i}\}$. However, for such basis to compute the dual Garside length it is not sufficient to know $m_{x}$ and $M_{x}$. We need to know the whole matrix, and the formula of dual Garside length might be quite complicated.
Thus our results says, roughly speaking, a natural basis of representation derived from a geometric or algebraic point of view is also natural with respect to the dual Garside structure.

The plan of this paper is as follows. In Section \ref{sec:dualGarside} we review a definition of dual Garside length. In Section \ref{sec:Lawrence} we briefly summarize Bigelow-like treatment of Lawrence's representation. A construction of quantum representations will be reviewed in Section \ref{sec:quantum}. We will also review Kohno's theorem which identifies Lawrence's representation with certain quantum representations, called {\em generic null vectors}.
Finally we will prove the main results of this paper, the dual Garside length formulae in Section \ref{sec:proof}.

Unfortunately, our proof of the dual Garside length formula for quantum representation, Theorem \ref{theorem:Q}, is indirect and tells us nothing why such an equality holds. Our proof is based on a topological method developed in \cite{iw} and we needed to use Theorem \ref{theorem:L}, the dual Garside length formula for Lawrence's representation. 

Contrary, the proof of Theorem \ref{theorem:L} partially provides a reason why Lawrence's representation detects the dual Garside length. As we have seen \cite{iw}, the dual Garside length are related to certain abelian covering (or, local coefficients) of punctured disc, which is a key ingredient of construction of Lawrence's representation. This observation and non-vanishing of the homological intersection pairing called Noodle-Fork pairing (Lemma \ref{lemma:key}) which was the key of the proof of faithfulness in Bigelow's theory lead to the dual Garside length.

It is an interesting problem to find an alternative proof of Theorem \ref{theorem:Q} via the theory of quantum groups. Our results suggest that there are unknown relationships between the dual Garside structure and quantum groups: in particular, the universal R-matrix of quantum groups might have a rich combinatorial structure, than the simple fact that it gives rise to a solution of Yang-Baxter equation.

It is also interesting to study a relationship between linear representations of braids and {\em usual Garside structure}, the other  standard Garside structure of braids. In \cite{kra2} Krammer proved that the LKB representation naturally and directly detects the usual Garside structure, by using the variable $\st$ instead of $\sq$. Hence it is natural to expect that Lawrence's representation and quantum representations which we considered here also detects the usual Garside structure, by using the other variables.

\begin{convention}
In a theory of the braid groups and quantum groups, various conventions are used. To avoid confusions, we summarize our conventions and notations below.

\begin{itemize}
\item All actions of the braid groups are considered as {\em left} actions, and the dual Garside structure we are treating is {\em right} Garside structure, as we will review in Section \ref{sec:dualGarside}.

\item All conventions about quantum groups $U_{q}(\sltwo)$ follows from \cite{jk}. In particular, $q$-numbers, $q$-fractionals, and $q$-binomial coefficients are defined by
\[ [n]_{q} ! = [n]_{q}[n-1]_{q} \cdots [2]_{q}[1]_{q},\]
\[ [n]_{q} = \frac{q^{n}-q^{-n}}{q-q^{-1}}, \;\; \;\; \left[ \!\!\begin{array}{c} n \\ j \end{array} \!\! \right]_{q} = \frac{[n]_{q}!}{[n-j]_{q}![j]_{q}!}. \]
As in \cite{jk}, we use Kassel's textbook \cite{kas} as a main reference of quantum groups. One main difference is that we use a variable (parameter) $\hbar$, which corresponds $2h$ in Kassel's book. 

\item As an element of mapping class group of $D_{n}$, a positive standard generator $\sigma_{i}$ of $B_{n}$ is identified with the {\em right-handed}, that is, the {\em unti-clockwise} half Dehn-twist which interchanges the punctures $p_{i}$ and $p_{i+1}$ of $D_{n}$.

It should be emphasized that this conventions is opposite to \cite{big1}, \cite{iw} and \cite{jk}. This convention is adapted to simplify the identification of quantum and homological representations, Corollary \ref{theorem:jkconjecture}.

\item The rest of conventions about homological representations, topological construction of representations we mainly follow \cite{iw}. In particular, the positive direction of winding (the meridian of hypersurfaces) is chosen as the {\em clockwise direction}. 

\item To distinguish the variables in quantum and Lawrence's representation, we use symbol $\sq,\st$ to represent variables in Lawrence's representations whereas we use $q,s$ to represent variables in quantum representations. Thus, $\sq$ and $q$ represent {\em different} variables.

\item Let $R$ be a subring of $\C$. For $R[x^{\pm 1}, y^{\pm 1}]$-module $V$ (where $x$ and $y$ are variables), we will denote the specialization of the variable $x$ and $y$ to complex numbers $c$ and $c'$ by $V|_{x=c, y=c'}$.
That is, $V|_{x=c,y=c'}$ is a $\C$-vector space $ \C \otimes_{R[x^{\pm 1},y^{\pm 1}]} V$ where we regard $\C$ as an $R[x^{\pm 1}, y^{\pm 1}]$-module by the specialization map $f_{c,c'} \co R[x^{\pm 1}, y^{\pm 1}] \rightarrow \C$ defined by $f_{c,c'}(x)=c$, $f_{c,c'}(y)=c'$.
\end{itemize}
\end{convention}

\textbf{Acknowledgments.} 
This research was supported by JSPS Research Fellowships for Young Scientists.
The author would like to thank Toshitake Kohno for stimulating discussion.

\section{Dual Garside length}
\label{sec:dualGarside}

In this section we review the dual Garside length.  
For details, see \cite{bkl}. 
For $1\leqslant i < j \leqslant n$, let $a_{i,j}$ be the braid
\[ a_{i,j} = (\sigma_{i+1} \cdots \sigma_{j-2}\sigma_{j-1})^{-1} \sigma_{i} ( \sigma_{i+1} \cdots \sigma_{j-2}\sigma_{j-1}) \] 

The generating set $\Sigma^{*} =\{a_{i,j}\: | \: 1\leqslant i < j \leqslant n\}$ was introduced in \cite{bkl}, and its elements are called the {\em dual Garside generators}, or {\em band generators}, or {\em Birman-Ko-Lee generators}.

A {\em dual-positive braid} is a braid which is written by a product of positive dual Garside generator $\Sigma^{*}$. The set of dual-positive braids is denoted by $B_{n}^{+*}$.
The {\em dual Garside element} is a braid $\delta$ given by
\[ \delta= a_{1,2}a_{2,3}\ldots a_{n-1,n} \]

Let $\preccurlyeq_{\Sigma^{*}}$ be the (right) subword partial ordering with respect to the dual Garside generating set~$\Sigma^{*}$: $\beta_{1} \preccurlyeq_{\Sigma^{*}} \beta_{2}$ if and only if $\beta_{2}\beta_{1}^{-1} \in B_{n}^{+*}$.
For a given braid $\beta$, the supremum $\sup_{\Sigma^{*}}(\beta)$ and the infimum $\inf_{\Sigma^{*}}(\beta)$ is defined by
\[ \sup\!{}_{\Sigma^{*}}(\beta) = \min \{ m \in \Z \: | \: \beta \preccurlyeq_{\Sigma^{*}} \delta^{m} \} \]
and 
\[ \inf\!{}_{\Sigma^{*}}(\beta) = \max \{ M \in \Z \: | \: \delta^{M} \preccurlyeq_{\Sigma^{*}} \beta \}\]
respectively.

A \emph{dual-simple} element is a dual-positive braid $x$ which satisfies $1 \preccurlyeq_{\Sigma^{*}} x \preccurlyeq_{\Sigma^{*}} \delta$. The set of dual-simple element is denoted by $[1,\delta]$.
The {\em dual Garside length} $l_{\Sigma^{*}}$ is the length function with respect to the generating set $[1,\delta]$. 

The next formula relates the supremum, infimum and the length.

\begin{proposition}[Dual Garside length \cite{bkl}]
\label{prop:dGlength}
For a braid $\beta \in B_{n}$ we have the following equality
\[ l_{\Sigma^{*}}(\beta) = \max\{0,\sup\!{}_{\Sigma^{*}}(\beta)\} - \min\{ \inf\!{}_{\Sigma^{*}}(\beta),0\}. \]
\end{proposition}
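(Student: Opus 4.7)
The plan is to prove the two inequalities separately, using the lattice and conjugation properties of the dual Garside structure from \cite{bkl}.

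For the lower bound, I would first establish the sub/super-additivity
\[ \sup\!{}_{\Sigma^{*}}(\alpha\beta) \leq \sup\!{}_{\Sigma^{*}}(\alpha) + \sup\!{}_{\Sigma^{*}}(\beta), \quad \inf\!{}_{\Sigma^{*}}(\alpha\beta) \geq \inf\!{}_{\Sigma^{*}}(\alpha) + \inf\!{}_{\Sigma^{*}}(\beta), \]
which follows because conjugation by $\delta$ permutes $\Sigma^{*}$ and hence preserves $B_{n}^{+*}$: from $\delta^{m}\alpha^{-1},\delta^{n}\beta^{-1}\in B_{n}^{+*}$ one rewrites $\delta^{m+n}(\alpha\beta)^{-1}$ as a product of two dual-positive factors by absorbing the middle power of $\delta$ through a $\delta$-conjugation. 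Every dual-simple $x\in[1,\delta]$ trivially has $\sup\!{}_{\Sigma^{*}}(x)\leq 1$ and $\inf\!{}_{\Sigma^{*}}(x)\geq 0$, so $\sup\!{}_{\Sigma^{*}}(x^{-1})\leq 0$ and $\inf\!{}_{\Sigma^{*}}(x^{-1})\geq -1$. Writing $\beta$ as a minimal-length word $y_{1}^{\epsilon_{1}}\cdots y_{k}^{\epsilon_{k}}$ with $y_{i}\in[1,\delta]$, and letting $k_{\pm}$ count the $\pm 1$ exponents, the additivity bounds give $\sup\!{}_{\Sigma^{*}}(\beta)\leq k_{+}$ and $-\inf\!{}_{\Sigma^{*}}(\beta)\leq k_{-}$, whence
\[ \max\{0,\sup\!{}_{\Sigma^{*}}(\beta)\} - \min\{\inf\!{}_{\Sigma^{*}}(\beta),0\} \leq k_{+}+k_{-} = l_{\Sigma^{*}}(\beta). \]

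For the upper bound I would invoke the mixed (GCD) decomposition supplied by the lattice structure on $([1,\delta],\preccurlyeq_{\Sigma^{*}})$: every braid $\beta$ factors uniquely as $\beta = \beta_{-}^{-1}\beta_{+}$ with $\beta_{\pm}\in B_{n}^{+*}$ sharing no common right dual divisor, and one checks directly from the definitions of $\sup\!{}_{\Sigma^{*}}$ and $\inf\!{}_{\Sigma^{*}}$ that
\[ \sup\!{}_{\Sigma^{*}}(\beta_{+}) = \max\{0,\sup\!{}_{\Sigma^{*}}(\beta)\}, \qquad \sup\!{}_{\Sigma^{*}}(\beta_{-}) = \max\{0,-\inf\!{}_{\Sigma^{*}}(\beta)\}. \]
Any dual-positive braid $\alpha$ has a normal form $\alpha = x_{1}\cdots x_{s}$ with $s = \sup\!{}_{\Sigma^{*}}(\alpha)$ dual-simple factors, so $l_{\Sigma^{*}}(\alpha)\leq s$. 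Concatenating the inverse word for $\beta_{-}$ with the word for $\beta_{+}$ yields
\[ l_{\Sigma^{*}}(\beta) \leq \sup\!{}_{\Sigma^{*}}(\beta_{-})+\sup\!{}_{\Sigma^{*}}(\beta_{+}) = \max\{0,-\inf\!{}_{\Sigma^{*}}(\beta)\} + \max\{0,\sup\!{}_{\Sigma^{*}}(\beta)\}, \]
which matches the formula via the identity $\max\{0,-a\} = -\min\{a,0\}$.

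The main obstacle is justifying the mixed decomposition and the identification of $\sup\!{}_{\Sigma^{*}}(\beta_{\pm})$ with the positive/negative parts of the sup/inf of $\beta$. This is precisely the content of the BKL lattice theorem for $(B_{n},\Sigma^{*},\delta)$ proved in \cite{bkl}: once the lattice property of $[1,\delta]$ and the fact that $\delta$-conjugation normalizes $\Sigma^{*}$ are in hand, the decomposition exists and the two additivity inequalities above follow essentially by inspection, so in the present paper the proof reduces to quoting this machinery.
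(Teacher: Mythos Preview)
The paper does not give its own proof of this proposition: it is stated as a known result with a citation to \cite{bkl}, and is used only as input in the proofs of Theorems~\ref{theorem:L} and~\ref{theorem:Q}. So there is nothing in the paper to compare your argument against beyond the reference itself.

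That said, your outline is the standard Garside-theoretic argument and is essentially correct. The lower bound via sub/super-additivity of $\sup_{\Sigma^{*}}$ and $\inf_{\Sigma^{*}}$ is clean (the key point being that $\delta$-conjugation preserves $B_{n}^{+*}$, which holds because it permutes the band generators). For the upper bound, the existence of the fractional decomposition $\beta=\beta_{-}^{-1}\beta_{+}$ with the stated properties, and the identification of $\sup_{\Sigma^{*}}(\beta_{\pm})$ with the positive parts of $\sup_{\Sigma^{*}}(\beta)$ and $-\inf_{\Sigma^{*}}(\beta)$, are exactly the consequences of the lattice structure established in \cite{bkl}; as you yourself note, this is where the real work lies, and in this paper it is legitimately treated as a black box. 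One small point of care: the paper works with the \emph{right} subword order $\preccurlyeq_{\Sigma^{*}}$ (so $\beta_{1}\preccurlyeq_{\Sigma^{*}}\beta_{2}$ iff $\beta_{2}\beta_{1}^{-1}\in B_{n}^{+*}$), and you should make sure your mixed decomposition and ``no common divisor'' condition are phrased consistently with that convention; the argument goes through either way, but the side on which the gcd is taken should match.
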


\section{Lawrence's representation}
\label{sec:Lawrence}
In this section we review basics of Lawrence's representation.

\subsection{Definition of Lawrence's representations}

First we review a construction of Lawrence's representation. See \cite{law}.
For $i= 1,2,\ldots,n$, let $p_{i}=i \in \C$ and $D_{n}=\{ z \in \C \: | \: |z| \leq n+1 \} - \{p_{1},\ldots,p_{n}\}$ be the $n$-punctured disc. The braid group $B_{n}$ is identified with the mapping class group of $D_{n}$, namely, the group of isotopy classes of homeomorphisms of $D_{n}$ that fix $\partial D_{n}$ pointwise. As we mentioned, the standard generator $\sigma_{i}$ is regarded as {\em right-handed} half Dehn twists which interchanges the $i$-th and $(i+1)$-st punctures.

For $m>0$ let $C_{n,m}$ be the unordered configuration space of $m$-points in $D_{n}$:
\[ C_{n,m} =\{ (z_{1},\ldots,z_{m}) \in D_{n} \: | \: z_{i} \neq z_{j} \;(i\neq j) \} \slash S_{m} \]
where $S_{m}$ is the symmetric group that acts as permutation of coordinates.
For $i=1,\ldots,n$, let $d_{i}=(n+1)e^{(\frac{3}{2}+ i\varepsilon)\pi \sqrt{-1}}  \in \partial D_{n}$ where $\varepsilon >0$ is sufficiently small number. We take $\{d_{1},\ldots,d_{n}\}$ as a base point of $C_{n,m}$.

The first homology group $H_{1}(C_{n,m}; \Z)$ is isomorphic to $\Z^{\oplus n} \oplus \Z$, where the first $n$ components corresponds to the meridians of the hyperplane $z_{1}=p_{i}$ $(i=1,\ldots,n)$ and the last component corresponds to the meridian of the discriminant, the union of hyperplanes $z_{i}=z_{j}$.
Let $\alpha: \pi_{1}(C_{n,m}) \rightarrow \Z^{2} = \langle \sq,\st\rangle$ be the homomorphism obtained by composing the Hurewicz homomorphism $\pi_{1}(C_{n,m}) \rightarrow H_{1}(C_{n,m}; \Z)$ and the projection $H_{1}(C_{n,m}; \Z) = \Z^{\oplus n} \oplus \Z \rightarrow \Z \oplus \Z = \langle \sq \rangle \oplus \langle \st \rangle$.

Let $\pi: \tC_{n,m} \rightarrow C_{n,m}$ be the covering corresponding to $\textrm{Ker}\,\alpha$, and 
fix a lift of the base point $\{\widetilde{d_{1}},\ldots,\widetilde{d_{m}}\}$. We regard $\sq$ and $\st$ as deck translations.
Then $H_{m}(\tC_{n,m};\Z)$ is a free $\Z[\sq^{\pm 1},\st^{\pm 1}]$-module of rank 
\[ d_{n,m} = \left( \begin{array}{c} m+n-2 \\ m \end{array}\right). \]

Let $E_{n,m}$ be the set 
\[ E_{n,m}= \{ \be = (e_{1},\ldots,e_{n-1}) \in \Z_{\geq 0}^{n-1}\: | \: e_{1} +\cdots +e_{n-1}=m\}\]
The cardinal of the set $E_{n,m}$ is equal to $d_{n,m}$.
We will use the set $E_{n,m}$ to give an index of basis.

The homomorphism $\alpha$ is invariant under the $B_{n}$-action so we get a linear representation 
\[ L'_{n,m}\co  B_{n} \rightarrow \GL(H_{m}(\tC_{n,m}; \Z) ). \]
We call this representation {\em Homological Lawrence's representation}.

There are subtle points in Lawrence's representation: As we will see later, to obtain an explicit matrix representation we use a certain basis of $H_{m}(\tC_{n,m};\Q)$ (as $\Q[\sq^{\pm1},\st^{\pm 1}])$-module) indexed by $E_{n,m}$, called the {\em standard multifork basis}. Unfortunately, the standard multifork basis might fail to be a basis of $H_{m}(\tC_{n,m};\Z)$ as $\Z[\sq^{\pm 1},\st^{\pm 1}]$-module (See \cite{pp} for the case $m=2$.). 
However, by using the standard multifork basis we will actually get a matrix representation of {\em integer} coefficients,
\[ L_{n,m} \co B_{n} \rightarrow \GL( d_{n,m}; \Z[\sq^{\pm 1},\st^{\pm 1}]). \]
We call $L_{n,m}$ {\em Lawrence's representation}, or, {\em Geometric Lawrence's representation}.

The case $m=1$ is rather special: In this case, it does not involve the variable $\st$ since the discriminant set is empty, and the Lawrence's representation $L_{n,1}$ is known to be the same as the reduced Burau representation.
The case $m=2$ was intensively studied by Bigelow \cite{big1} and Krammer \cite{kra1}, \cite{kra2}, and called the {\em Lawrence-Krammer-Bigelow representation} (the LKB representation).

\subsection{Multiforks and noodle-fork pairing}

In this section we give a brief exposition of generalization of Bigelow's theory of the LKB representation (the case $m=2$) to general Lawrence's representations. Such a generalization can be found in \cite{z1},\cite{z2}. The case $m=1$ was also treated by Bigelow in \cite{big4}.

To represents a homology classes of $H_{m}(\tC_{n,m};\Z)$, we use a geometric object, called {\em fork} and {\em multifork}. 

Let $Y$ be the $Y$-shaped graph shown in Figure~\ref{fig:multifork}, having one distinguished external vertex $r$, two other external vertices $v_{1}$ and $v_{2}$, and one internal vertex~$c$. We orient the edges of $Y$ as shown in Figure \ref{fig:multifork}.

A {\em fork} $F$ based on $d_{i}$ is an embedded image of $Y$ into $D^2$ such that:
\begin{itemize}
\item All points of $Y\setminus \{r,v_1,v_2\}$ are mapped to the interior of~$D_n$.
\item The distinguished vertex $r$ is mapped to $d_{i}$.
\item The other two external vertices $v_{1}$ and $v_{2}$ are mapped to two different puncture points.
\item The edge $[r,c]$ and the arc $[v_1,v_2]=[v_{1},c] \cup [c,v_{2}]$ are both mapped smoothly.
\end{itemize}

The image of the edge $[r,c]$ is called the {\em handle} of~$F$ and denoted $H(F)$. The image of $[v_1,v_2]=[v_{1},c] \cup [c,v_{2}]$, regarded as a single oriented arc, is called the {\em tine}, denoted $T(F)$. The image of $c$ is called the \emph{branch point} of~$F$.

A {\em multifork} is a family of forks $\bF= (F_{1},\ldots,F_{m})$ such that 
\begin{itemize}
\item $F_{i}$ is a fork based on $d_{i}$.
\item $T(F_{i}) \cap T(F_{j}) \cap D_{n} = \phi$ $(i\neq j)$.
\item $H(F_{i}) \cap H(F_{j}) = \phi$ $(i \neq j)$.
\end{itemize}
See the middle of Figure \ref{fig:multifork}, which gives an example of multifork for the case $m=3$.

\begin{figure}[htbp]
 \begin{center}
 \includegraphics[scale=0.5, width=100mm]{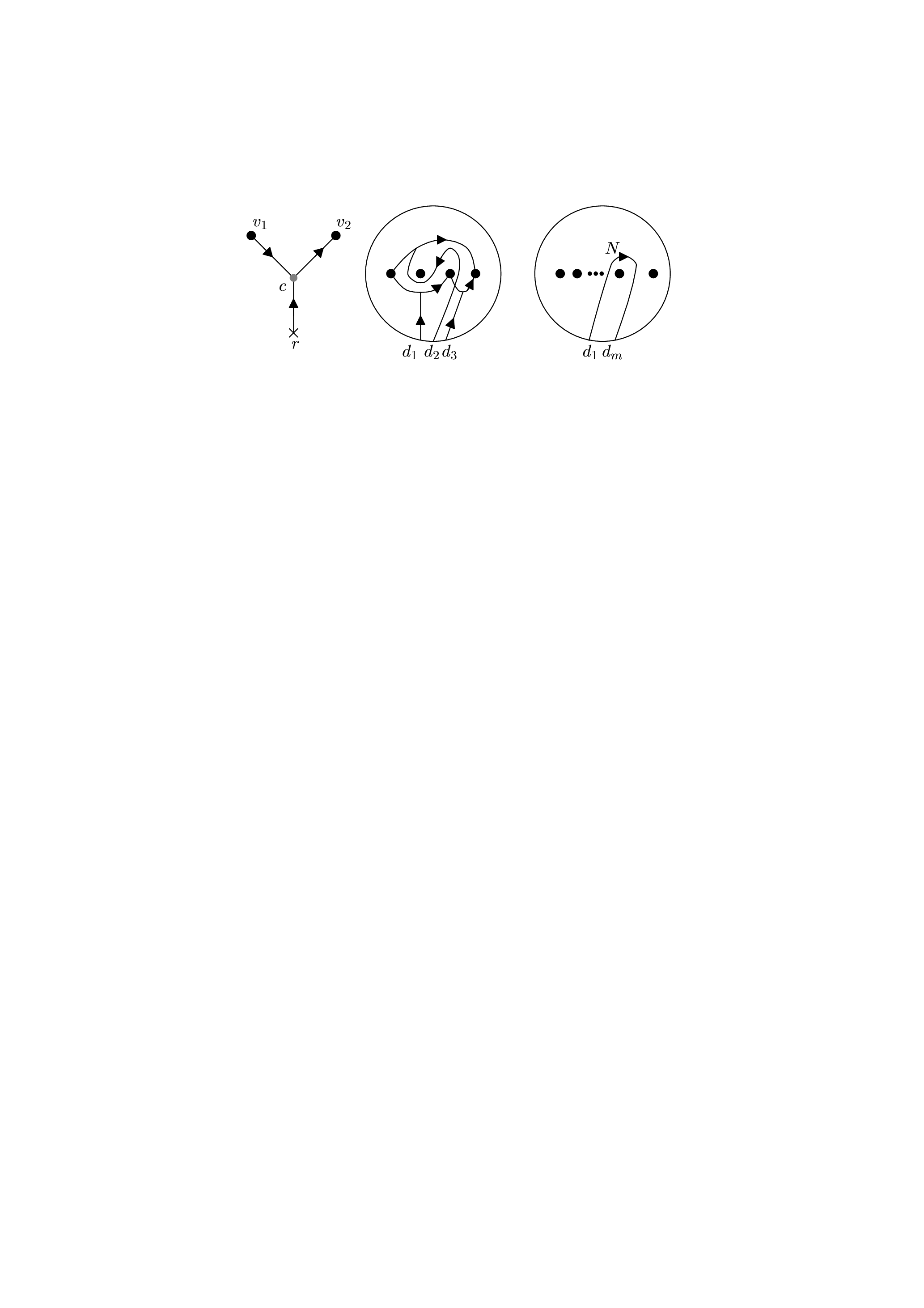}
  \caption{Multifork and Noodle}
 \label{fig:multifork}
  \end{center}
  \end{figure}

Let $\gamma_{i} \co [0,1] \rightarrow D_{n}$ be the handle of $F_{i}$, viewed as a path in $D_{n}$. Since we assumed that all handles are disjoint, we get a path in $C_{n,m}$,
\[ H(\bF) = \{\gamma_{1},\ldots,\gamma_{m}\} \co [0,1] \rightarrow C_{n,m}. \]
Take a lift of $H(\bF)$
\[ \widetilde{H(\bF)} \co [0,1] \rightarrow \tC_{n,m} \]
so that $\widetilde{H(\bF)}(0)=\{ \widetilde{d_{1}},\ldots, \widetilde{d_{m}}\}$.

Let $\Sigma(\bF) = \left\{ \{z_{1},\ldots,z_{m} \} \in C_{n,m} \: | \: z_{i} \in T(F_{i}) \right\}$ be the $m$-dimensional submanifold in $C_{n,m}$. Let $\widetilde{\Sigma}(\bF)$ be the $m$-dimensional submanifold of $\tC_{n,m}$ which is the connected component of $\pi^{-1}(\Sigma(\bF))$ that contains the point $\widetilde{H(\bF)}(1)$.
 Then $\widetilde{\Sigma}(\bF)$ defines an element of $H_{m}(\tC_{n,m};\Z)$. By abuse of notation, we will use $\bF$ to represent both multifork and the homology class $[\widetilde{\Sigma}(\bF)] \in H_{m}(\tC_{n,m};\Z)$.
 
Using a multifork, we construct a basis of $H_{m}(\tC_{n,m};\Q)$ as a $\Q[\sq^{\pm 1}, \st^{\pm 1}]$-module, indexed by the set $E_{n,m}$ as follows.
For $\be = (e_{1},\ldots,e_{n-1}) \in E_{n,m}$, we assign a multifork $\bF_{\be}=\{F_{1},\ldots, F_{m}\}$ as shown in Figure \ref{fig:standmultifork}. We say such a multifork a {\em standard multifork}. A standard multifork corresponding to a sequence of the form 
 \[ \be =(\underbrace{0,\ldots,0}_{i-1},m,\underbrace{0,\ldots,0}_{n-1-i}) \]
  is called a {\em straight fork} and denoted by $\bF_{i}$.
  
\begin{figure}[htbp]
 \begin{center}
\includegraphics[scale=0.5, width=90mm]{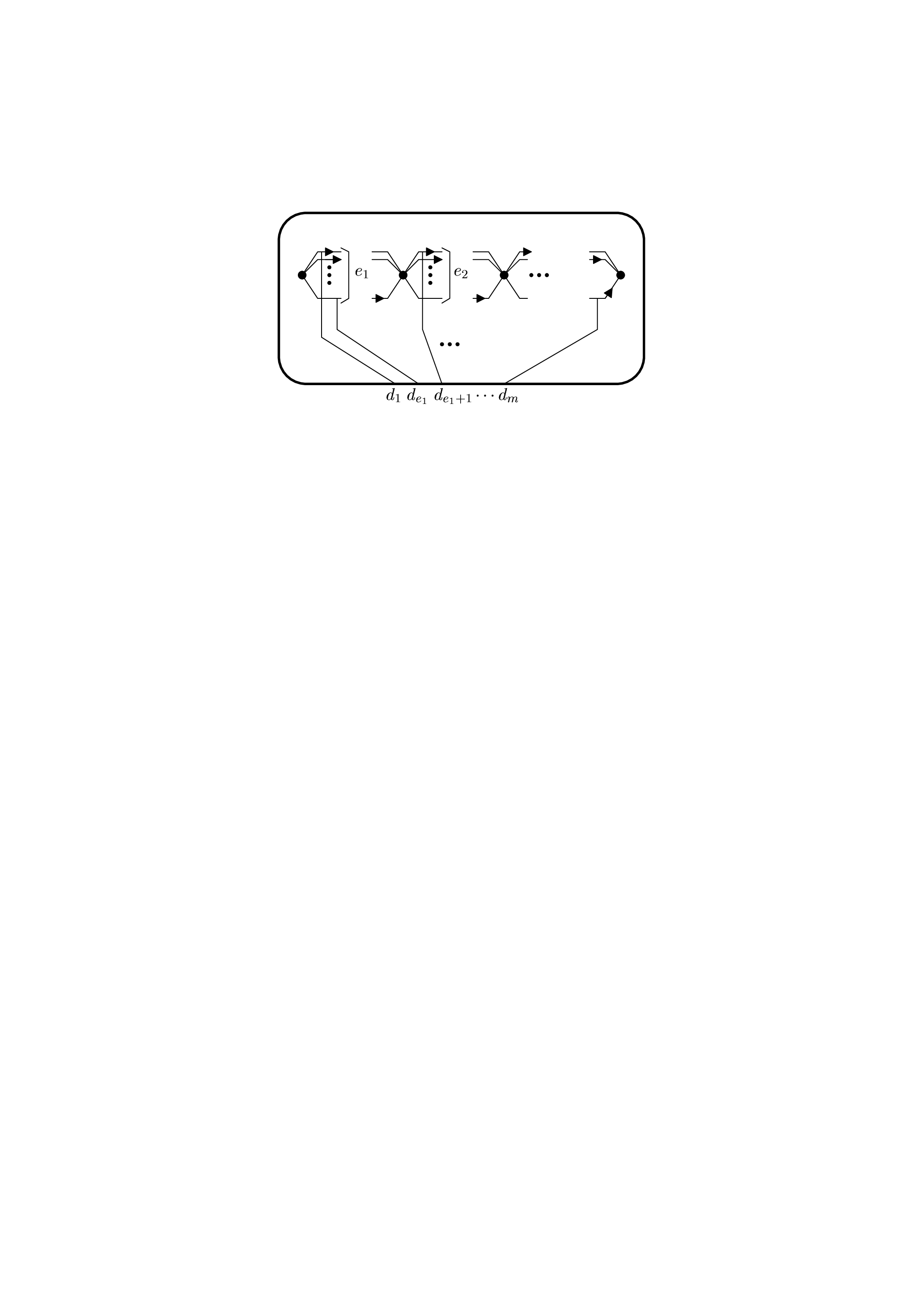}
 \caption{Standard multifork $\bF_{\mathbf{e}}$}
 \label{fig:standmultifork}
  \end{center}
\end{figure}

\begin{proposition}[Lawrence \cite{law}, Zheng \cite{z1}]
A set of standard multiforks $\{\bF_{\be}\}_{\be \in E_{n,m}}$ forms a basis of $H_{m}(\tC_{n,m};\Q)$ as a $\Q[\sq^{\pm 1},\st^{\pm 1}]$-module.
Moreover, for an $n$-braid $\beta$ and a standard multifork $\bF$, 
\[ \beta(\bF) \in \textrm{span}_{\Z[\sq^{\pm 1},\st^{\pm 1}]} \{ \bF_{\be} \}_{\be \in E_{n,m}} \subset H_{m}(\tC_{n,m};\Z). \]
\end{proposition}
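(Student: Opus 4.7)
The plan is to attack both parts through the \emph{noodle--fork intersection pairing} alluded to in Figure~\ref{fig:multifork} and in the section title. For each $\be\in E_{n,m}$ I would introduce a dual multinoodle $N_{\be}\subset C_{n,m}$: an $m$-dimensional properly embedded submanifold built from $m$ pairwise disjoint arcs in $D_n$ joining boundary points of $\partial D_n$, chosen so that each arc meets the corresponding tine of $\bF_{\be}$ transversely in a single point while avoiding the tines of $\bF_{\be'}$ for all $\be'$ ``smaller'' than $\be$ in a fixed (say lexicographic) ordering on $E_{n,m}$. Lifting to $\tC_{n,m}$ and counting signed intersections, weighted by the deck-transformation monomial in $\sq,\st$, then produces an intersection pairing with values in $\Z[\sq^{\pm 1},\st^{\pm 1}]$.

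To prove the basis statement, I would compute the matrix $M=(\langle N_{\be'},\bF_{\be}\rangle)_{\be,\be'\in E_{n,m}}$ and show that, with respect to the chosen ordering, $M$ is triangular with diagonal entries of the form $c_{\be}\sq^{a(\be)}\st^{b(\be)}$ for some nonzero $c_{\be}\in\Q$. Such a matrix has determinant equal to a nonzero monomial, hence a unit in $\Q[\sq^{\pm 1},\st^{\pm 1}]$, so pairing against the noodles identifies the $\Q[\sq^{\pm 1},\st^{\pm 1}]$-span of the $\bF_{\be}$ with a full-rank free submodule of $H_m(\tC_{n,m};\Q)$. Since the latter is already known to be free of rank $d_{n,m}=|E_{n,m}|$, the multiforks must span it and form a basis. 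The fact that the rationals $c_{\be}$ need not be $\pm 1$ is precisely the obstruction to integrality, matching the failure over $\Z[\sq^{\pm 1},\st^{\pm 1}]$ recorded in \cite{pp}.

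For the $\Z[\sq^{\pm 1},\st^{\pm 1}]$-equivariance claim I would argue by reduction to the generators $\sigma_i$ of $B_n$. Since $\sigma_i$ is a right-handed half-twist supported on a small disc around $p_i,p_{i+1}$, the image $\sigma_i(\bF_{\be})$ differs from a disjoint union of standard multiforks only inside this twist disc. Resolving the twist by elementary arc slides and the standard handle-crossing relations---each contributing a factor of $\sq$, $\st$, or $-1$---expresses $\sigma_i(\bF_{\be})$ as an explicit $\Z[\sq^{\pm 1},\st^{\pm 1}]$-linear combination of standard multiforks. The $\Z[\sq^{\pm 1},\st^{\pm 1}]$-span of $\{\bF_{\be}\}$ is therefore stable under each $\sigma_i^{\pm 1}$, hence $B_n$-invariant.

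The main obstacle is the combinatorial bookkeeping in the triangularity computation: one must choose the noodles and the ordering on $E_{n,m}$ compatibly, then track the winding numbers (that is, the $\sq,\st$-exponents) of each lifted intersection point in the abelian cover. The case $m=1$ collapses to the classical computation for the Burau representation, and $m=2$ is the Bigelow noodle-fork calculation for the LKB representation; the genuinely new content in \cite{law, z1} is the multi-index extension of the same geometric ideas to arbitrary $m$.
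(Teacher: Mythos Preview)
The paper does not prove this proposition; it is quoted from \cite{law,z1} without argument, so there is no in-paper proof to compare against.  I can therefore only comment on the internal logic of your sketch.

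Your outline contains a genuine gap in the spanning step.  From the (assumed) triangularity of the pairing matrix with unit diagonal you deduce that the $\bF_{\be}$ span a free rank-$d_{n,m}$ submodule, and then conclude from equality of ranks that this submodule is all of $H_{m}(\tC_{n,m};\Q)$.  Over a ring that is not a field this inference fails: a free $\Q[\sq^{\pm1},\st^{\pm1}]$-module of rank $d$ has many proper free submodules of the same rank (for instance the one generated by $(1+\sq)e_{1},e_{2},\ldots,e_{d}$ in $R^{d}$).  What an invertible pairing matrix actually buys you is linear independence, hence a basis after passing to the fraction field $\Q(\sq,\st)$; descending to the Laurent ring requires a separate argument that the $\bF_{\be}$ generate.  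The references \cite{law,z1} supply exactly this missing input, typically through an explicit cell or chamber description of $C_{n,m}$ whose top-dimensional pieces are indexed by $E_{n,m}$, rather than through a pure intersection-number argument.

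A softer issue: the ``dual multinoodles $N_{\be}$'' you invoke are not objects defined in this paper (where a noodle is a single embedded arc, and there are only $n$ standard ones), and arranging $d_{n,m}$ such objects so that the pairing matrix is triangular with \emph{monomial} diagonal is itself a nontrivial construction you would have to carry out, not merely announce.  Your sketch of the $B_{n}$-invariance, by contrast, is essentially correct: computing $\sigma_{i}(\bF_{\be})$ locally inside the twist disc and resolving via the fork relations does produce integral Laurent-polynomial coefficients, and this is indeed how the matrices $L_{n,m}(\sigma_{i})$ are obtained in practice.
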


Thus, by using the standard multifork basis we obtain a matrix representation
\[ L_{n,m} : B_{n} \rightarrow \GL(d_{n,m}; \Z[\sq^{\pm 1},\st^{\pm 1}]) \]
which we call {\em Lawrence's representation}.

\begin{remark}
In the case $m=2$ (the LKB representation case), the definition of standard (multi)fork is different from the definition used in \cite{big1},\cite{iw}. So as an explicit matrix representation, $L_{n,2}$ and the LKB representation given in \cite{big1} (and considered in \cite{iw}) are not the same. 
We also remark that the definition of the noodle-fork pairing which will be given below is also different from the definition of \cite{big1}.

However, as we will see in the Section \ref{sec:Lformula}, these difference does not affect the main results in \cite{iw}, and one can easily check that all arguments in \cite{iw} directly applied to $L_{n,2}$. One of the key point is that the definition of straight forks given here and in \cite{iw} are identical.
\end{remark}

To represent a homology class in $H_{m}(\tC_{n,m}, \partial \tC_{n,m};\Z)$, we use a similar geometric object.
A {\em noodle} is an oriented smooth embedded arc which begins at $d_{1}$ and ends at $d_{n}$. Let $\Sigma_{N}=\{ \{z_{1},\ldots,z_{m}\} \in C_{n,m} \: | \: z_{i} \in N \}$ and $\widetilde{\Sigma}(N)$ be the connected component of $\pi^{-1}(\Sigma_{N})$ that contains $\{\widetilde{d_{1}},\ldots,\widetilde{d_{m}}\}$. The $m$-dimensional submanifold $\widetilde{\Sigma}(N)$ defines an element of $H_{m}(\tC_{n,m}, \partial \tC_{n,m};\Z)$. By abuse of notation we use $N$ to represent both a noodle and its representing homology class $[\widetilde{\Sigma}(N)] \in H_{m}(\tC_{n,m}, \partial \tC_{n,m};\Z)$. A {\em standard noodle} $N_{i}$ is a noodle that encloses the $i$-th puncture point $p_{i}$ as shown in Figure \ref{fig:multifork} right.

The {\em noodle-fork pairing} is a homology intersection pairing 
\[ \langle\;\;,\;\;\rangle \co H_{m}(\tC_{n,m}, \partial \tC_{n,m};\Z)  \times H_{m}(\tC_{n,m};\Z) \rightarrow \Z[\sq^{\pm 1}, \st^{\pm 1}]. \]

For a noodle $N$ and multifork $\bF=\{F_{1},\ldots,F_{m} \}$, the noodle-fork pairing $\langle N,\bF \rangle$ can be computed as follows.
We assume that $N$ and $T(F_{i})$ transverse for each $i$. Then two submanifolds $\widetilde{\Sigma}(N)$ and $\widetilde{\Sigma}(\bF)$ also transverse. 
Let us take intersection points $z_{i} \in N \cap F_{i}$ for each $i$. Then a set of intersections $\bz= \{z_{1},\ldots,z_{m}\}$ corresponds to an intersection point of $\widetilde{\Sigma}(N)$ and $\widetilde{\Sigma}(\bF)$. 
This $\bz$ contributes the pairing $\langle N, \bF \rangle$ by a monomial $\varepsilon_{\bz}m_{\bz} = \varepsilon_{\bz} \sq^{a_{\bz}}\st^{b_{\bz}}$, where $\varepsilon_{\bz} \in \{\pm 1\}$ represents the sign of the intersection at $\bz$.

One can compute the monomial $m_{\bz}=\sq^{a_{\bz}}\st^{b_{\bz}}$ as follows.
Assume that on the noodle $N$, along the orientation of $N$ the intersection points $z_{1},\ldots,z_{m}$ appears in the order $z_{\tau(1)}, z_{\tau(2)},\ldots, z_{\tau(m)} $ (Here $\tau$ represents an appropriate permutation). For each $i$ let $\delta_{i}$ be a path connecting $z_{\tau(i)}$ and $d_{i}$ as in Figure \ref{fig:NFpairing}: $\delta_{i}$ goes back to near $d_{1}$ along a path parallel to $N$, then return to $d_{i}$ along a path parallel to $\partial D_{n}$. We choose these paths $\{\delta_{i}\}$ so that they are disjoint.

Now take three paths $A_{i}, B_{i}$ and $C_{i}$ in $D_{n}$ as follows (See Figure \ref{fig:NFpairing} left).
\begin{itemize}
\item $A_{i}$ is a path from $d_{i}$ to the branch point of $F_{i}$ along the handle of $F_{i}$.
\item $B_{i}$ is a path from the branch point to $z_{i}$ along the tine $T(F_{i})$.
\item $C_{i} = \delta_{\tau(i)}$.
\end{itemize}

Then the concatenation of the three paths \[ \{C_{1},\ldots,C_{m}\}\{B_{1},\ldots, B_{m}\}\{A_{1},\ldots, A_{m}\} \]
 defines a loop $l$ in $C_{n,m}$. The monomial $m_{\bz}$ is given by $\alpha(l) \in \langle \sq,\st \rangle$.

\begin{figure}[htbp]
 \begin{center}
\includegraphics[scale=0.5, width=100mm]{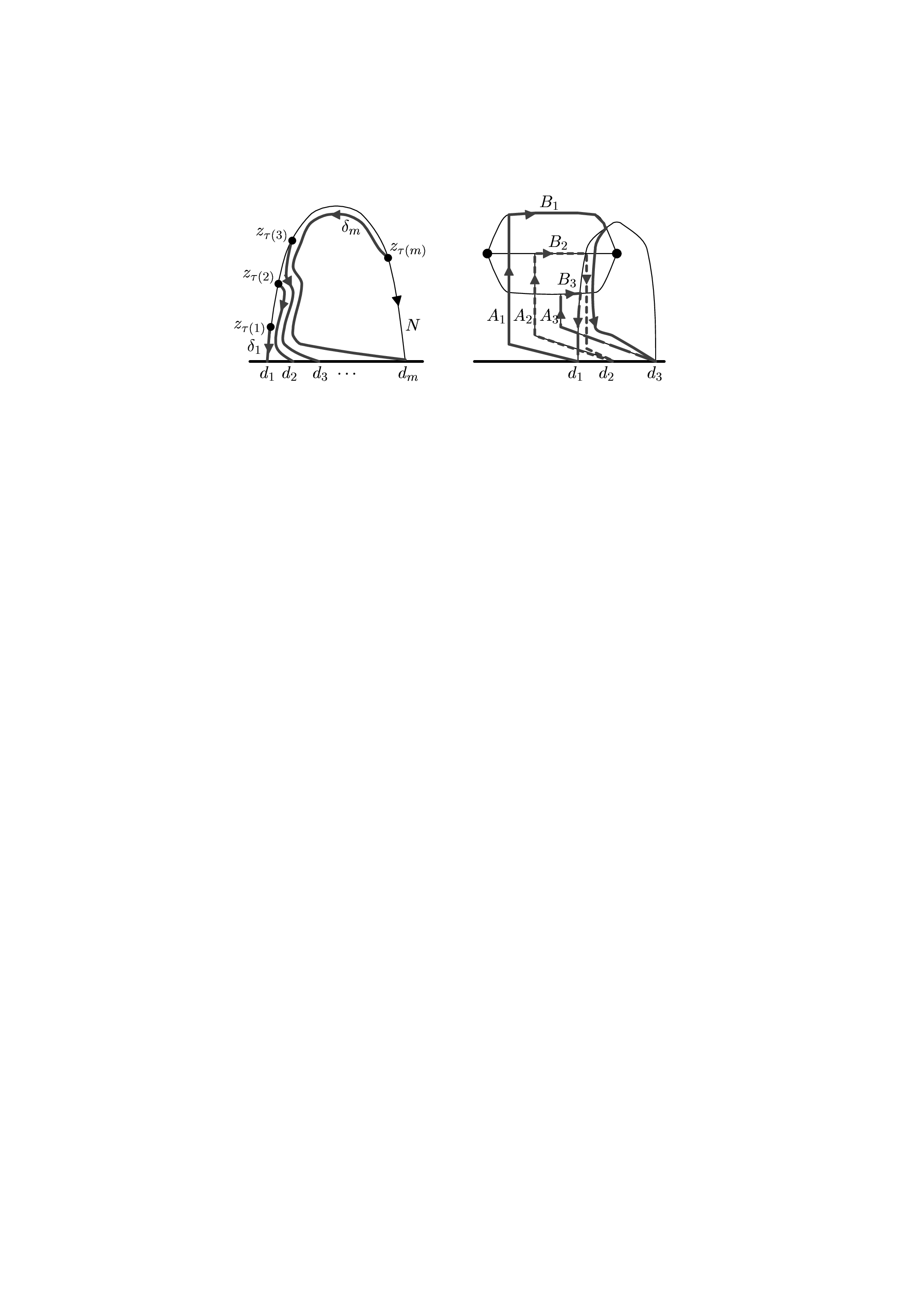}
   \caption{Computing Noodle-Fork pairing}
 \label{fig:NFpairing}
  \end{center}
\end{figure}

The key property of the noodle-fork pairing is that it detects the non-triviality of the geometrical intersections.
For monomials $\sq^{a}\st^{b}$ and $\sq^{a'}\st^{b'}$ we define the lexicographical ordering $\leqslant_{\sq,\st}$ by 
\[ \sq^{a}\st^{b} \leqslant_{\sq,\st} \sq^{a'}\st^{b'} \textrm{ \ \ \ if } a < a', \textrm{ or if } a=a' \textrm{ and } b \leqslant b'.\]  

The next lemma is a direct generalization of Bigelow's key lemma. The proof is a direct adaptation of Bigelow's proof.  

\begin{lemma}[Bigelow's key lemma, {\cite[Lemma 3.2 -- Claim 3.4]{big1}}]
\label{lemma:key}
Let $\bF=\{F_{1},\ldots,F_{m}\}$ be a multifork such that all tines are parallel.
Assume that a noodle $N$ have the minimal intersection with all tines $T(F_{i})$. Then all intersection points $\bz$ of $\widetilde{\Sigma}(N)$ with $\widetilde{\bF}$ which attain the $<_{\sq,\st}$-maximal monomial $m_{\bz}$ in $\langle N,\bF \rangle$ have the same sign $\varepsilon_{\bz}$.
\end{lemma}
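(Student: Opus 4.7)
The plan is to generalize Bigelow's argument for the LKB case ($m=2$) to arbitrary multiforks with parallel tines. The strategy has three components: parametrize the intersections $\bz \in \widetilde{\Sigma}(N) \cap \widetilde{\Sigma}(\bF)$ by combinatorial data, track how the pair $(m_{\bz}, \varepsilon_{\bz})$ changes under elementary moves among such $\bz$'s, and show that any two $\leqslant_{\sq,\st}$-maximal intersections are connected by moves that preserve the sign.

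First, I would observe that each intersection point $\bz$ corresponds to an $m$-tuple $(z_1,\ldots,z_m)$ with $z_i \in N \cap T(F_i)$, together with a lift choice so that $\widetilde{\Sigma}(N)$ and $\widetilde{\Sigma}(\bF)$ meet at the correct preimage. The monomial $m_{\bz} = \sq^{a_{\bz}}\st^{b_{\bz}}$ is read off the loop $\{C_i\}\{B_i\}\{A_i\}$ described in Figure \ref{fig:NFpairing}: the exponent $a_{\bz}$ records signed windings around the punctures along $A_i$ and $C_i$, while $b_{\bz}$ records pairwise windings among the coordinates $z_i$. The sign $\varepsilon_{\bz}$ factors as a product of local transverse intersection signs of $N$ against each $T(F_i)$ at $z_i$, multiplied by a permutation sign associated to the order in which the $z_i$'s appear along $N$.

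Second, I would analyze two types of elementary moves relating nearby intersection tuples: (a) replacing $z_i$ by an adjacent intersection of $N$ with the same tine $T(F_i)$, and (b) swapping the noodle-order of two coordinates $z_i, z_j$ by sliding one past a crossing. Each move's effect on $a_{\bz}$, $b_{\bz}$, and $\varepsilon_{\bz}$ can be computed from a local picture. The minimal-intersection hypothesis, via the bigon criterion applied to $N$ and each tine, bounds which local configurations can arise; the parallel-tine hypothesis synchronizes the local orientation data across distinct tines. Combining these, any sign-flipping elementary move must decrease $m_{\bz}$ strictly under $\leqslant_{\sq,\st}$: a flip at $z_i$ either shifts $a_{\bz}$ down (via the $A_i$-$C_i$ winding balance forced by the bigon-free condition) or shifts $b_{\bz}$ down (via a pairwise crossing controlled by the parallel-tine geometry).

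Third, suppose $\bz, \bz'$ both attain the maximal monomial. Any sequence of elementary moves connecting them must preserve $m_{\bz}$ throughout, hence by the previous step can contain no sign-flipping move, so $\varepsilon_{\bz} = \varepsilon_{\bz'}$. The main obstacle I anticipate is the second step: namely, verifying rigorously that every sign-changing move genuinely costs a strict decrease in either $a_{\bz}$ or $b_{\bz}$. For $m=2$ Bigelow carries this out by an explicit enumeration of local bigon-free crossing patterns; for general $m$ the per-tine local analysis is identical, but one must additionally check that the pairwise contributions to $b_{\bz}$ coming from the other $m-1$ coordinates do not accidentally cancel the monomial decrease. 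The parallel-tine hypothesis is exactly what guarantees this: it forces the inter-tine windings to appear with consistent signs, so a sign flip at any single $z_i$ cannot be compensated by the others.
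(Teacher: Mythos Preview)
The paper itself gives no proof of this lemma: it states that ``the proof is a direct adaptation of Bigelow's proof'' and cites \cite[Lemma 3.2--Claim 3.4]{big1}. So your proposal should be compared against Bigelow's original argument (and its extension to general $m$ in \cite{z1,z2}), which is what the paper is implicitly invoking.

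Your outline has a genuine logical gap. In step~2 you assert that ``any sign-flipping elementary move must decrease $m_{\bz}$ strictly.'' But your elementary moves (replacing $z_i$ by an adjacent intersection on the same tine, or swapping noodle-order) are reversible, and the reverse of a sign-flipping move is again sign-flipping; hence it cannot be true that \emph{every} sign-flipping move strictly decreases the monomial. At best one could hope that \emph{at a maximal $\bz$}, every sign-flipping move goes strictly down---but even granting this, step~3 does not follow. From ``$\bz$ and $\bz'$ are both maximal'' you cannot conclude that ``any sequence of moves connecting them stays maximal throughout'': a path may descend and then re-ascend, using sign-flipping moves in both directions, and end with the opposite sign. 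To make a connectivity argument work you would additionally need to prove that the set of maximal intersections is connected by moves that remain maximal, which is a separate (and nontrivial) statement you have not addressed.

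Bigelow's actual argument is organised differently and avoids this issue. He first analyses, for a \emph{single} tine $T(F_i)$ in minimal position with $N$, how the local sign and the $\sq$-exponent $a(z_i)$ of an individual intersection $z_i$ are correlated as one moves along $N$ and along $T(F_i)$; the bigon-free hypothesis forces a specific monotone relationship, so that the intersections realising the maximal $a(z_i)$ are geometrically pinned down and carry a determined sign. Since the tines are parallel, this description is identical for every $F_i$, and the tuples $\bz$ with maximal $a_{\bz}$ are exactly those with each $z_i$ individually maximal. One then maximises $b_{\bz}$ over this restricted set; because the parallel tines meet $N$ in the same pattern, the relative noodle-order of the $z_i$'s that maximises $b_{\bz}$ is again explicitly identified, and its sign is computed directly. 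In short, Bigelow \emph{characterises} the maximal $\bz$'s and reads off their common sign, rather than arguing by connectivity of the maximal set. Your per-tine local analysis in step~2 is the right ingredient, but it should feed into this direct characterisation instead of the flawed path argument of step~3.
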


This lemma proves that $L_{n,m}$ is faithful, as in Bigelow's argument.

\section{Quantum representation}
\label{sec:quantum}

In this section we review quantum representations and a theorem of Kohno that relates quantum and Lawrence's representation.

\subsection{Quantum $\sltwo$ and its modification $\mU$} 

We review a construction of the algebra $\mU$, a modification of $U_{q}(\sltwo)$ introduced in \cite{jk}.
As we mentioned earlier, for all conventions about $U_{q}(\sltwo)$ we follows \cite{jk}. To introduce $\mU$, we first recall standard facts on $U_{q}(\sltwo)$. For details, see \cite{kas}. 

Let $q \in \C^{*}=\C-\{0\}$ be a non-zero complex-valued parameter.
The quantum group (the quantum enveloping algebra of $\sltwo$) $U_{q}(\sltwo)$ is an Hopf algebra over $\C$ generated by $E,F^{(n)}$ $(n=1,2,\ldots)$, $K$ and $K^{-1}$ having the relations
\begin{equation}
\label{eqn:relation}
\left\{
\begin{array}{l}
 KK^{-1} = K^{-1}K=1, \;\;\;KEK^{-1}=q^{2}E , \\
K F^{(n)}K^{-1} = q^{-2n}F^{(n)},\;\;\; F^{(n)}F^{(m)} = \left[ \!\! \begin{array}{c} n+m \\ n \end{array} \! \!\right]_{q}F^{(n+m)}.
 \end{array}
\right.
\end{equation}
The coproduct $\Delta$ and antipode $S$ are given by 
\begin{equation}
\label{eqn:coprod}
\left\{
\begin{array}{l}
\Delta(K) = K \otimes K,\;\;\; \Delta (E) = E \otimes K + 1 \otimes E,\\
\Delta( F^{(n)}) = \sum_{j=0}^{n} q^{-j(n-j)} K^{j-n}F^{(j)} \otimes F^{(n-j)} \\
 S(K)=K^{-1}, S(E) = -EK^{-1},\; S(F^{(n)}) = (-1)^{n} q^{n(n-1)} K^{n}F^{(n)}.
\end{array}
\right.
\end{equation}

For a complex number $s \in \C$, the {\em Verma module of highest weight $s$} is an $U_{q}(\sltwo)$-module $V_{s}$ spanned by $\{v_{0},v_{1},\ldots \}$, where the action is given by
\begin{equation}
\label{eqn:module}
\left\{ 
\begin{array}{rll}
Kv_{j} &= & sq^{-2j}v_{j} \\
Ev_{j} &= & v_{j-1} \\
F^{(n)}v_{j} & = &\left( \left[ \!\begin{array}{c} n+j \\ j \end{array}\! \right]_{q}{\displaystyle \prod_{k=0}^{n-1}} (sq^{-k-j} - s^{-1}q^{k+j}) \right) v_{j+n}. 
\end{array}
\right. 
\end{equation}

Now we modify the algebra $U_{q}(\sltwo)$ to treat both the weight $s$ and the quantum parameter $q$ as variables.
Let $\mL = \Z[q^{\pm 1},s^{\pm 1}]$ be the ring of two-variable Laurent polynomial of integer coefficient. 

Let $\mU$ be an Hopf algebra over $\mL$ generated by $E,F^{(n)},K,K^{-1}$  having the the same relation (\ref{eqn:relation}) and the coproduct and antipode are defined by (\ref{eqn:coprod}). 
Let $V$ be a free $\mL$-module spanned by $\{v_{0},v_{1},\ldots \}$.
Then (\ref{eqn:module}) defines an $\mU$-module structure on $V$. We say an $\mU$-module $V$ a {\em generic Verma module}. 

From the universal $R$-matrix of $U_{q}(\sltwo)$, we get a linear representation of the braid groups
  \[\rho: B_{n} \rightarrow \GL(V^{\otimes n}) \]
defined by 
  \[ \rho( \sigma_{i} ) = \textsf{id}^{\otimes (i-1)}\otimes R \otimes \textsf{id}^{\otimes (n-i-1)}. \]  
Here $R: V \otimes V \rightarrow V \otimes V$ is given by

\begin{equation}
\label{eqn:R-matrix}
R(v_{i} \otimes v_{j}) = s^{-(i+j)} \sum_{n=0}^{i} F_{i,j,n}(q) \prod_{k=0}^{n-1}(sq^{-k-j}-s^{-1}q^{k+j}) v_{j+n}\otimes v_{i-n}.
\end{equation}
 where $F_{i,j,n}(q)$ is a Laurent polynomial of the variable $q$ defined by
\begin{equation}
\label{eqn:F}
 F_{i,j,n}(q) = q^{2(i-n)(j+n)} q^{\frac{n(n-1)}{2}} \left[\!
  \begin{array}{c}\! \!n+j \\ j \end{array} \!\!\right]_{q}. 
\end{equation}
 
 For $n>1$, define $\Delta^{(n)}:\mU \rightarrow \mU^{\otimes n}$ by
 \[ \Delta^{(2)}=\Delta, \Delta^{(n)}= (\Delta^{(n-1)}\otimes \textsf{id})\Delta. \]
$\mU$ acts on $V^{\otimes n}$ by $u\cdot x = (\Delta^{(n)}u) x$.
For $n$ and $m$, the {\em space of generic highest weight vectors} is 
 \[ V_{n,m} = \textrm{ker}\; (K- s^{n}q^{-2m}). \]
The {\em space of (generic) null vectors} $W_{n,m}$ is a subspace of $V_{n,m}$
which $E$ trivially acts,
\[ W_{n,m} = \textrm{ker}\, (E) \cap V_{n,m}. \] 
It is directly seen that both $V_{n,m}$ and $W_{n,m}$ are also $B_{n}$-representations.

$V_{n,m}$ is a free $\mL$-module of rank $d_{n+1,m}$. It is spanned by the vectors
\[ \{ v_{e_{1}} \otimes v_{e_{2}} \otimes \cdots \otimes v_{e_{n}} \: | \: \be=(e_{1},\ldots, e_{n}) \in E_{n+1,m}.\} \]
In this paper we use slightly modified basis of $V_{n,m}$.
For $\be \in E_{n+1,m}$ we define $v_{\be} \in V_{n,m}$ by
\[ v_{\be} =s^{\sum_{i=1}^{n} ie_{i}} v_{e_{1}} \otimes v_{e_{2}} \otimes \cdots \otimes v_{e_{n}}. \]

Using this basis $\{v_{\be}\}$, we get a matrix valued representation
\[ \rho^{V}_{n,m} \co B_{n} \rightarrow \GL( d_{n+1,m} ; \mL ) \]

Similarly, $W_{n,m}$ is a free $\mL$-module of rank $d_{n,m}$ \cite[Theorem 1]{jk}.
 A basis of $W_{n,m}$ was given in \cite{jk}, but here we use a slightly different basis following Kohno, which corresponds to the standard multifork basis of $H_{m}(\tC_{n,m};\Z)$ as we will see later.

Let $i \co E_{n,m} \rightarrow E_{n+1,m}$ be an injection defined by
\[ i((e_{1},\ldots,e_{n-1})) = (0,e_{1},\ldots,e_{n-1}). \]
Let $V'_{n,m}=\mL v_{0} \otimes V_{n-1,m} \subset V_{n,m}$. Then $\{v_{i(\be)}\}_{\be \in E_{n,m}}$ gives a basis of $V'_{n,m}$.

Define a map $\Phi \co V'_{n,m} \rightarrow W_{n,m}$ by
 \[ \Phi( v_{0}\otimes u) = \sum_{k=0}^{m} (-1)^{k} q^{2km - k(k+1)} s^{-k(n-1)} v_{k} \otimes E^{k}u. \]

$\Phi$ is an isomorphism of $\mL$-modules. By using the image of basis $\{\Phi(v_{i(\be)})\}_{\be \in E_{n,m}}$ as a basis of $W_{n,m}$, we get a matrix representation
\[ \rho^{W}_{n,m} \co B_{n} \rightarrow \GL( d_{n,m};\mL ). \]

We will sometimes regard the map $\Phi$ as a map $V'_{n,m} \rightarrow V_{n,m}$ in an obvious way, and regard $\Phi$ as a $(d_{n,m} \times d_{n+1,m})$-matrix $M_{\Phi}$
\[ M_{\Phi} \co V'_{n,m} \cong \mL^{d_{n,m}} \rightarrow \mL^{d_{n+1,m}} \cong V_{n,m} \]
 by using the basis $\{ v_{i(\be)} \}_{\be \in E_{n,m}}$ of $V'_{n,m}$ and the basis $\{v_{\be}\}_{\be \in E_{n+1,m}}$ of $V_{n,m}$.
 
For an $n$-braid $\beta$, using the matrices $M_{\Phi}$ and $\rho^{V}_{n,m}(\beta)$ we can compute the $(d_{n,m} \times d_{n,m})$-matrix $\rho^{W}_{n,m}(\beta)$ as follows.

Let us define $\rho^{\Phi}_{n,m}:V'_{n,m} \rightarrow V'_{n,m}$ by $\rho^{\Phi}_{n,m} = \Phi^{-1} \circ \rho^{W}_{n,m}\circ \Phi$.
Since we have identified the basis of $V'_{n,m}$ and $W_{n,m}$ by $\Phi$, the matrix $\rho^{W}_{n,m}(\beta)$ coincides with the matrix expression of $\rho^{\Phi}_{n,m}(\beta)$ with respect to the basis $\{v_{i(\be)} \}_{\be \in E_{n,m}}$. 
 
Let $\pi': V_{n,m} \rightarrow V'_{n,m}$ be the projection map. Then $\Phi^{-1}|_{W_{n,m}} =\pi'|_{W_{n,m}}$, so as a matrix we get an equality
\begin{equation}
\label{eqn:WV}
 \rho^{W}_{n,m}(\beta) = \pi' \circ \rho^{V}_{n,m}(\beta) \circ M_{\Phi}.
\end{equation}

The next lemma shows that our choice of basis behaves well with respect to the map $\Phi$, that is, $\Phi$ does not affect the degree of the variable $s$.

\begin{lemma}
\label{lemma:phi}
$M_{s}(M_{\Phi})=m_{s}(M_{\Phi})=0$. That is, each entry of the matrix $M_{\Phi}$ does not involve the variable $s$.
\end{lemma}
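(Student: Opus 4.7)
The plan is to compute $\Phi(v_{i(\be)})$ directly in the normalized tensor basis $\{v_{\be''}\}_{\be''\in E_{n+1,m}}$ of $V_{n,m}$ and check that every $s$-exponent cancels to zero. I will write $\widetilde{v}_\gamma=v_{\gamma_1}\otimes\cdots\otimes v_{\gamma_k}$ for the unnormalized tensor, so that $v_\gamma=s^{\sum_i i\gamma_i}\widetilde{v}_\gamma$, and decompose any $\be''\in E_{n+1,m}$ as $(k,\be')$ with $k=e''_1$ and $\be'=(e''_2,\ldots,e''_n)\in E_{n,m-k}$.

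First I would unpack $v_{i(\be)}=s^{\sum_l(l+1)e_l}\,v_0\otimes\widetilde{v}_\be$ and apply the definition of $\Phi$, obtaining
\[
\Phi(v_{i(\be)})=\sum_{k=0}^{m}(-1)^k q^{2km-k(k+1)}\,s^{\sum_l(l+1)e_l-k(n-1)}\,v_k\otimes E^k\widetilde{v}_\be.
\]
Next I would expand $E^k\widetilde{v}_\be$ in the basis $\{\widetilde{v}_{\be'}\}_{\be'\in E_{n,m-k}}$ using the coproduct $\Delta^{(n-1)}(E)=\sum_{i=1}^{n-1}1^{\otimes(i-1)}\otimes E\otimes K^{\otimes(n-1-i)}$. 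The crucial structural observation is that each application of $E$ at position $i$ contributes exactly the $s$-factor $s^{n-1-i}$ from the $n-1-i$ trailing $K$'s, while the residual effects $Ev_j=v_{j-1}$ and the $q$-powers of $K$ involve no $s$. Summing over all length-$k$ ordered sequences of lowering positions from $\be$ to $\be'$, with $k_l=e_l-e'_l$ moves at position $l$, the total $s$-exponent $\sum_r(n-1-i_r)=\sum_l k_l(n-1-l)$ depends only on the multiset of positions, so
\[
E^k\widetilde{v}_\be=\sum_{\be'\in E_{n,m-k}}s^{\sum_l(e_l-e'_l)(n-1-l)}\,\widetilde{c}^{(k)}_{\be,\be'}(q)\,\widetilde{v}_{\be'}
\]
for some $\widetilde{c}^{(k)}_{\be,\be'}(q)\in\Z[q^{\pm1}]$.

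Finally I would rewrite $v_k\otimes\widetilde{v}_{\be'}=\widetilde{v}_{(k,\be')}=s^{-k-\sum_l(l+1)e'_l}\,v_{(k,\be')}$, collect the four $s$-exponents accumulated, and simplify. The total $s$-exponent in the coefficient of $v_{(k,\be')}$ becomes
\[
\sum_l(l+1)e_l-k(n-1)+\sum_l(e_l-e'_l)(n-1-l)-k-\sum_l(l+1)e'_l.
\]
Since $(l+1)+(n-1-l)=n$ for every $l$, the $(e_l-e'_l)$-terms collapse to $n\sum_l(e_l-e'_l)=nk$ (using $\sum_l e_l-\sum_l e'_l=m-(m-k)=k$), and the total is $nk-kn=0$, proving the lemma.

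The only genuine obstacle is the middle step: verifying that the $s$-dependence of each matrix coefficient of $E^k$ is a single monomial of the predicted degree. This hinges on the structural fact that $s$ enters the formalism only through the $K$-eigenvalue, and in $\Delta^{(n-1)}(E)$ the $K$'s appear only as \emph{trailing} tensor factors, so permuting a sequence of lowerings changes the $q$-powers but not the $s$-power. That structural match is precisely what makes the prefactor $s^{\sum_i ie_i}$ in the definition of $v_\be$ and the prefactor $s^{-k(n-1)}$ in the definition of $\Phi$ conspire to remove all $s$-dependence from $M_\Phi$.
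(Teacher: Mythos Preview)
Your proof is correct and follows essentially the same route as the paper: both compute $\Phi(v_{i(\be)})$ by first determining the $s$-degree of each term of $E^{k}$ acting on an unnormalized tensor (the paper's formula $s^{-\sum_l l k_l + k(n-1)}$ is exactly your $s^{\sum_l(e_l-e'_l)(n-1-l)}$), then re-expressing the result in the normalized basis $\{v_{\be''}\}$ and checking that the accumulated $s$-exponent vanishes. Your bookkeeping with the $\widetilde{v}$-notation and the identity $(l+1)+(n-1-l)=n$ is a bit tidier, but the argument is the same.
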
 
\begin{proof}

First observe that by definition of the action of $\mU$ on $V_{n,m}$, we get
\[ E^{k} (v_{e_{1}} \otimes \cdots \otimes v_{e_{n-1}}) = \sum_{\bk \in E'_{n,k}} F_{\bk}(q) s^{-\sum_{i=1}^{n}i k_{i} + k(n-1)}  (v_{e_{1}-k_{1}} \otimes \cdots \otimes v_{e_{n-1}-k_{n-1}}) \]
where $E'_{n,k}$ is a subset of $E_{n,k}$ defined by
\[ E'_{n,k} =\{\bk= (k_{1},\ldots,k_{n-1}) \in E_{n,k} \: | \: k_{i} \leq e_{i} \}. \]
and $F_{\bk}(q)$ denotes a non-zero Laurent polynomial of $q$. 
For $\bk \in E'_{n,k}$, let 
\[ \be(\bk) = (k, e_{1}-k_{1},\ldots,e_{n-1}-k_{n-1}) \in E_{n+1,m}. \]

For $\be \in E_{n,m}$ we get
\begin{eqnarray*}
M_{\Phi} (v_{i(\be)}) & = & s^{m+ \sum_{i=1}^{n} ie_{i}} M_{\Phi}(v_{0} \otimes v_{e_{1}} \otimes \cdots \otimes v_{e_{n-1}}) \\
  & = & s^{m+ \sum_{i=1}^{n} ie_{i}} \sum_{k=0}^{m} F_{k}(q) s^{-k(n-1)} v_{k} \otimes E^{k} (v_{e_{1}} \otimes \cdots \otimes v_{e_{n-1}})\\
  & = & s^{m+ \sum_{i=1}^{n} ie_{i}} \sum_{k=0}^{m} F_{k}(q) s^{-k(n-1)} \\
&  &  \;\;\; \left( \sum_{\bk \in E'_{n,k}} F_{\bk}(q) s^{-\sum_{i=1}^{n} i k_{i} + k(n-1)} v_{k} \otimes v_{e_{1}-k_{1}} \otimes \cdots \otimes v_{e_{n-1}-k_{n-1}} \right)\\
  & = & \sum_{k=1}^{m} \sum_{\bk \in E'_{n,k}} F_{k,\bk}(q) s^{m+ \sum_{i=1}^{n}(ie_{i} - ik_{i}) } v_{k} \otimes v_{e_{1}-k_{1}} \otimes \cdots \otimes v_{e_{n-1}-k_{n-1}}\\
  & = & \sum_{k=1}^{m} \sum_{\bk \in E'_{n,k}} F_{k,\bk}(q) v_{\be(\bk)}
\end{eqnarray*}
where $F_{k}(q)$ and $F_{k,\bk}(q)$ represent non-zero Laurent polynomials of $q$.
Thus, each entry of the matrix $M_{\Phi}$ lies in the Laurent polynomial ring $\Z[q,q^{-1}]$. 
\end{proof}

\subsection{Kohno's theorem}

Next we review Kohno's theorem on Lawrence's representation and quantum representation. For details, see \cite{koh}. Although he did not explicitly state, he actually proved Jackson-Kerler's conjecture \cite[Conjecture 4]{jk}.

Kohno's approach was based on the theory of KZ-equations. To use theory of KZ-equation,
we regard $B_{n}$ as the fundamental group of so-called the complement braid arrangement and replace the configuration space $C_{n,m}$ with the complement of certain hyperplane arrangement. 

Let $X_{n}$ be the complement of the braid arrangement, that is,
\[ X_{n} = \C^{n} -\bigcup_{1\leq i<j \leq n} \textrm{Ker}\,(z_{i}-z_{j}) \]
and $Y_{n}$ be its quotient
\[ Y_{n} = X_{n} \slash S_{n}\]
where the symmetric group $S_{n}$ acts on $X_{n}$ as a permutation of coordinates. Then $\pi_{1}(Y_{n})\cong B_{n}$.

Let $\fg$ be a complex semi-simple Lie algebra (in this paper we only use the case $\sltwo(\C)$) and let $\{I_{\mu}\}$ be an orthogonal basis of $\fg$ with respect to the Cartan-Killing form. Put $\Omega=\sum_{\mu} I_{\mu}\otimes I_{\mu} \in \fg\times \fg$.

Take a $\fg$-module $V$ and denote by $\Omega_{ij}$ the action of on $\Omega$ on the $i$-th and $j$-th component of $V^{\otimes n}$.
The {\em Knizhnik-Zamolodchilov connection} ({\em KZ-connection}, in short) is an $\textrm{End}(V^{\otimes n})$-valued 1-form given by
\[ \omega_{KZ} = \frac{\hbar}{\pi \sqrt{-1} } \sum_{1\leq i<j \leq n} \Omega_{ij}\frac{dz_{i}-dz_{j}}{z_{i}-z_{j}}.\]
where $\hbar \in \C^{*}$ is a certain complex parameter.
(The parameter $\hbar$ corresponds to $\frac{\pi \sqrt{-1}}{\kappa}$ in Kohno's notation in \cite{koh}, and coincide with the one used in \cite{jk}. It corresponds to the parameter $\frac{h}{2}$ in Kassel's book \cite{kas}).

This is a flat connection of $Y_{n} \times V^{\otimes n}$, a trivial vector bundle over $Y_{n}$ with fiber $V^{\otimes n}$. 
By considering the monodromy representation of the flat connection, we get a linear representation
\[ \theta \co B_{n} = \pi_{1}(Y_{n}) \rightarrow \GL( V^{\otimes n}) \]

Now we consider the following special case of the monodromy representations.
Let $\lambda$ be a complex number and $M_{\lambda}$ be the Verma module of $\sltwo(\C)$ with highest weight $\lambda$:
$M_{\lambda}$ is a $\C$-vector space spanned by $\{ v_{j} \}_{j=0,1,\ldots }$ and the action of $\sltwo(\C)$ is given by
\[
\left\{
\begin{array}{lll}
 H v_{j}& = & (\lambda - 2j )v_{j} \\
 E v_{j}& = & v_{j-1}\\
 F v_{j} & = &(j+1)(\lambda - j ) v_{j+1}.
 \end{array}
 \right.
.
\]
Here $H,E,F$ are standard generators of $\sltwo(\C)$.

The {\em space of null vectors} is 
\[ N[n \lambda -2m] = \{v \in M_{\lambda}^{\otimes n} \: | \: H v = (n \lambda -2m) v, Ev=0 \}\]
$N(m\lambda -2m)$ have a basis $\{w_{\be}\}$ indexed by $\be \in E_{n,m}$,
\[ w_{\be} = \sum_{i=0}^{m} (-1)^{i} \frac{1}{\lambda ( \lambda -1) \cdots (\lambda -i) } F^{i}v_{0} \otimes E^{i}( F^{e_{1}}v_{0} \otimes F^{e_{2}}v_{0} \otimes \cdots \otimes F^{e_{n-1}}v_{0} ).\]

The KZ connection $\omega_{KZ}$ commutes with the diagonal action of $\sltwo(\C)$ on $M_{\lambda}^{\otimes n}$, so the monodromy of KZ connection gives a linear representation which takes value in $N[n\lambda - 2m]$,
\[ \rho_{KZ} \co B_{n} \rightarrow \GL ( N[ n\lambda- 2m]). \]

\begin{lemma}
The map $w_{\be} \rightarrow v_{i(\be)}$ $(\be \in E_{n,m})$ defines a $\C B_{n}$-module isomorphism between $N[n\lambda -2m]$ and $W_{n,m}|_{q=e^{\hbar}, s = e^{\hbar\lambda}}$.
\end{lemma}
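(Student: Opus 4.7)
The proof is essentially a matter of invoking Kohno's theorem from \cite{koh} and then performing a direct comparison of the explicit bases on each side. I would proceed as follows.

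First, by the Drinfeld--Kohno theorem (as worked out by Kohno in \cite{koh} in precisely this setting), the monodromy representation $\rho_{KZ}$ of the KZ connection on $M_{\lambda}^{\otimes n}$ is isomorphic, after specializing $q = e^{\hbar}$ and $s = e^{\hbar\lambda}$, to the $R$-matrix representation $\rho$ on $V^{\otimes n}|_{q=e^{\hbar},\, s=e^{\hbar\lambda}}$. Crucially, this isomorphism intertwines the diagonal action of $\sltwo(\C)$ on $M_{\lambda}^{\otimes n}$ with the action of $\mU$ via the iterated coproduct $\Delta^{(n)}$ on $V^{\otimes n}$ after specialization. Consequently weight spaces are preserved (identifying the $(n\lambda - 2m)$-weight space with the $K = s^{n}q^{-2m}$ eigenspace) and the kernel of $E$ corresponds on both sides, yielding an isomorphism of $\C B_{n}$-modules $N[n\lambda - 2m] \cong W_{n,m}|_{q=e^{\hbar},\, s=e^{\hbar\lambda}}$.

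Second, I would verify that this abstract isomorphism carries the basis $\{w_{\be}\}$ to the basis $\{\Phi(v_{i(\be)})\}_{\be \in E_{n,m}}$ of $W_{n,m}$, which under our identification of $V'_{n,m}$ with $W_{n,m}$ via $\Phi$ is precisely the basis $\{v_{i(\be)}\}$ appearing in the statement. This is a direct comparison: the formula $\Phi(v_{0} \otimes u) = \sum_{k}(-1)^{k} q^{2km - k(k+1)} s^{-k(n-1)} v_{k}\otimes E^{k}u$ mirrors the formula $w_{\be} = \sum_{i}(-1)^{i}\frac{1}{\lambda(\lambda-1)\cdots(\lambda - i)} F^{i}v_{0}\otimes E^{i}(F^{e_{1}}v_{0}\otimes\cdots\otimes F^{e_{n-1}}v_{0})$. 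Using the classical identity $F^{i}v_{0} = i!\,\lambda(\lambda-1)\cdots(\lambda - i + 1)\,v_{i}$ in $M_{\lambda}$ and the Taylor expansion $sq^{-k} - s^{-1}q^{k} = 2\sinh(\hbar(\lambda - k))$ at $q = e^{\hbar}$, $s = e^{\hbar\lambda}$, the two expressions match term by term up to an overall scalar depending only on $\be$.

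The main obstacle is the careful bookkeeping of normalization scalars: the classical and quantum conventions for powers of $F$ differ by $\be$-dependent factors, and one must verify that these factors do not spoil braid-equivariance. Because the Drinfeld--Kohno identification respects the weight-graded tensor product structure on both sides, and the rescaling factor for each $w_{\be}$ is constant (depending only on the multi-index $\be$ and the parameters $\hbar,\lambda$), the rescaling commutes with the braid action, and the map $w_{\be}\mapsto v_{i(\be)}$ is a $\C B_{n}$-module isomorphism as claimed.
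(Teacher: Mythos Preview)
Your overall approach---invoking the Drinfel'd--Kohno theorem---is the same as the paper's; indeed the paper's proof is nothing more than a one-sentence citation of that theorem and of Kassel's Chapter XIX. Where you go beyond the paper is in trying to check explicitly that the abstract Drinfel'd--Kohno isomorphism carries $w_{\be}$ to $\Phi(v_{i(\be)})$, and it is here that a real gap appears.

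In your third paragraph you assert that because the discrepancy between $w_{\be}$ and $\Phi(v_{i(\be)})$ under the Drinfel'd--Kohno map is a scalar $c_{\be}$ depending only on $\be$, ``the rescaling commutes with the braid action.'' This is a non-sequitur. The braid action mixes basis vectors with different indices $\be$, so a diagonal rescaling $\Phi(v_{i(\be)})\mapsto c_{\be}\,\Phi(v_{i(\be)})$ is $B_{n}$-equivariant only if the scalars $c_{\be}$ are constant along each $B_{n}$-orbit in the basis---in particular, if the representation is irreducible, all the $c_{\be}$ must coincide. You have not shown this; in fact your second paragraph explicitly produces $\be$-dependent normalization factors without arguing they cancel. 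If the $c_{\be}$ genuinely varied, the map $w_{\be}\mapsto v_{i(\be)}$ would simply fail to be a $\C B_{n}$-module map, even though the two modules are abstractly isomorphic. To close the gap you must either compute that the $c_{\be}$ are in fact independent of $\be$, or invoke a version of Drinfel'd--Kohno that explicitly tracks the tensor-product bases (this is what the paper's pointer to \cite[Chapter XIX]{kas} is implicitly relying on).
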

\begin{proof}
This is a consequence of Drinfel'd-Kohno Theorem \cite{dri},\cite{koh1}, \cite[Theorem XIX.4.1]{kas} on monodromy representations and quantum representations. See \cite[Chapter XIX]{kas} for detail. 
\end{proof}

To give a homological interpretation of $N[n\lambda -2m]$, we use a slightly different description of Lawrence's representation.

Let $\pi_{n,m}:X_{n+m} \rightarrow X_{n}$ be the projection of the first $n$-coordinates: 
\[ \pi_{n,m}(z_{1},\ldots,z_{n},t_{1},\ldots,t_{m}) = (z_{1},\ldots, z_{n}).\] 
This is a fiber bundle over $X_{n}$, and let $X_{n,m}= \pi_{n,m}^{-1}(p_{1},\ldots,p_{n})$ be its fiber, which is the complement of hyperplane arrangement called the {\em discriminantal arrangement},
\[ X_{n,m} = \C^{m} - \left( \bigcup_{1\leq i<j \leq m} \textrm{Ker}\,(t_{i}-t_{j}) \cup \bigcup_{1\leq i \leq m, 1\leq l \leq n} \textrm{Ker}\,(t_{i}-p_{l}) \right). \]
Let
\[ Y_{n,m} = X_{n,m} \slash S_{m} \]
where the symmetric group $S_{m}$ acts on $X_{n,m}$ as a permutation of the coordinates.
Then 
\[ \pi_{n,m}: X_{n+m} \slash (S_{n}\times S_{m}) \rightarrow Y_{n} \]
be a fiber bundle over $Y_{n}$ whose fiber is $Y_{n,m}$.
By definition, there are natural inclusion $\iota: C_{n,m} \rightarrow Y_{n,m}$ which is a homotopy equivalence.

For complex numbers $\lambda$ and $\hbar$, we consider the specialization $\sq=e^{2 \hbar \lambda}$ and $\st = -e^{-2\hbar}$ to get a representation
\[r_{\lambda,\hbar}: \pi_{1}(Y_{n,m}) = \pi_{1}(C_{n,m}) \stackrel{\alpha}{\rightarrow} \langle \sq,\st \rangle  \rightarrow \C^{*}. \]
Let $\mcL_{\lambda,\hbar}$ be the associated local system on $Y_{n,m}$.

For $\be \in E_{n,m}$ let $\Delta_{\be}$ be the bounded chamber in $\R^{m}$ defined by 
\[ \Delta_{\be} =\{ (t_{1},\ldots,t_{m} ) \in \R^{m} \: | \: i <t_{m_{1}+\cdots + m_{i-1} +1} < \cdots t_{m_{1}+\cdots +m_{i-1}+m_{i}} < i+1\} \]
and $[\Delta_{\be}] \in H_{m}(Y_{n,m};\mcL_{\hbar,\lambda})$ be the homology class represented by the image of $\Delta_{\be}$.

\begin{proposition}[\cite{koh}]
\label{proposition:generic}
There is an open dense subset $U$ of $(\C^{*})^{2}$ such that  for $(\lambda,\hbar) \in U$, the following holds:
\begin{enumerate}
\item The set of bounded chambers $\{\Delta_{\be}\}_{\be \in E_{n,m}}$ is a basis of $H_{m}(Y_{n,m};\mcL_{\hbar,\lambda})$.
\item The map
\[ \iota_{*}: H_{m}(\tC_{n,m},\Z)|_{\sq=e^{2 \hbar \lambda},\st= -e^{-2\hbar}} \rightarrow H_{m}(Y_{n,m}, \mcL_{\lambda, \hbar})\]
induces an isomorphism of $\C B_{n}$-module. 
\end{enumerate}
\end{proposition}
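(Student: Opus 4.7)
The plan is to deduce both parts from the classical theory of twisted homology of hyperplane arrangement complements, combined with a direct comparison of the two topological models $C_{n,m}$ and $Y_{n,m}$.

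For part (1), I would invoke the Aomoto-Kita-Schechtman-Varchenko principle: for a generic local system on the complement of a hyperplane arrangement in $\C^{m}$, the twisted homology is concentrated in top degree $m$ and its dimension equals the number of bounded chambers of any real form of the arrangement. Applied to the discriminantal arrangement defining $X_{n,m}$ and then descended to the $S_{m}$-quotient $Y_{n,m}$, the bounded chambers of the real arrangement modulo $S_{m}$ are in natural bijection with $E_{n,m}$: a bounded chamber is determined by how many of the coordinates $t_{1},\ldots,t_{m}$ sit in each of the open intervals $(l,l+1)$ between the punctures $p_{l}=l$. This produces both the correct dimension $d_{n,m}=\binom{m+n-2}{m}$ and the explicit representatives $[\Delta_{\be}]$. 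Linear independence would then be verified by pairing against the Schechtman-Varchenko hypergeometric dual cocycles, and the exceptional locus where this argument fails is the complement of an explicit union of resonance hypersurfaces in $(\C^{*})^{2}$.

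For part (2), I first observe that $\iota \co C_{n,m} \to Y_{n,m}$ is a homotopy equivalence, since $D_{n}$ deformation retracts onto a one-complex homotopy equivalent to $\C\setminus\{p_{1},\ldots,p_{n}\}$, and formation of unordered configuration spaces preserves homotopy equivalence. Next, I would verify that $\iota^{*}\mcL_{\lambda,\hbar}$ matches the rank-one local system on $C_{n,m}$ obtained from $\tC_{n,m}$ by the specialization $\sq=e^{2\hbar\lambda}$, $\st=-e^{-2\hbar}$. Both local systems are determined by homomorphisms $\pi_{1}(C_{n,m})\to \C^{*}$, so it suffices to compare monodromies around the two families of meridians: around a puncture hyperplane $t_{i}=p_{l}$ the monodromy is $\sq$, and around the diagonal $t_{i}=t_{j}$ it is $\st$. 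This matching gives the underlying $\C$-linear isomorphism claimed in (2).

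The main obstacle is $B_{n}$-equivariance of $\iota_{*}$. The braid action on $H_{m}(\tC_{n,m};\Z)$ is induced by mapping classes of $D_{n}$, whereas the action on $H_{m}(Y_{n,m};\mcL_{\lambda,\hbar})$ arises as the monodromy of the fibration $\pi_{n,m} \co X_{n+m}/(S_{n}\times S_{m})\to Y_{n}$. To reconcile these, one uses the standard identification that monodromy of the braid fibration realizes the mapping class group action on the fiber: a loop in $Y_{n}$ representing $\sigma_{i}$ is covered in the total space by an ambient isotopy whose time-one map is isotopic to the half-twist homeomorphism of $D_{n}$. This is the technically delicate step and is essentially the content of Kohno's identification of braid monodromies in the KZ setting. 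The open set $U$ is then obtained as the intersection of the generic locus from (1) with the open condition that the specialized standard multifork basis remains linearly independent; both conditions exclude only an algebraic subset of $(\C^{*})^{2}$.
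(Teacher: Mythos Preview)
The paper does not actually prove this proposition: it is stated with attribution to Kohno \cite{koh} and used as a black box. So there is no ``paper's own proof'' to compare against; the result is imported wholesale from the reference.

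That said, your sketch is essentially the standard route and is presumably close to what is done in \cite{koh}. The ingredients you name are the right ones: the Schechtman--Varchenko / Aomoto--Kita vanishing and bounded-chamber basis for generic rank-one local systems on an arrangement complement give (1), with the exceptional set a finite union of resonance hypersurfaces in the parameters; the homotopy equivalence $\iota$ (which the paper already records just before the proposition) together with the monodromy computation around the two types of hyperplanes gives the identification of local systems needed for (2). Your diagnosis that $B_{n}$-equivariance is the genuinely delicate point, and that it comes down to identifying the monodromy of the fibration $\pi_{n,m}$ with the mapping-class action on the fiber, is exactly right; this is the content supplied by Kohno's analysis and is not something one can wave through. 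One small quibble: for the open dense $U$ you do not need to separately impose that the specialized multifork basis stays independent --- once (1) holds and $\iota_{*}$ is an isomorphism, the image of the multifork basis is identified with the bounded-chamber basis, so independence is automatic on the same locus.
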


\begin{remark}
In Proposition \ref{proposition:generic} (2), the signs are different from Kohno's paper \cite{koh}. This difference is due to the convention that we regard a positive direction of winding, that is, a positive orientation of the meridian of hypersurfaces as {\em clockwise direction}, which is different from usual conventions adapted in \cite{koh}.
\end{remark}
 
 We say two parameters $(\lambda,\hbar) \in (\C^{*})^{2}$ are {\em generic} if $(\hbar,\lambda) \in U$..
 By definition of the bounded chamber $\Delta_{\be}$ and standard multifork $\bF_{\be}$ the map $\iota$ sends the standard multifork basis $\{\bF_{\be}\}_{\be \in E_{n,m}}$ to the bounded chamber basis $\{\Delta_{\be}\}_{\be \in E_{n,m}}$.

Now Kohno's theorem is stated as follows. The idea of his proof is to express solutions of KZ equation (a horizontal section of KZ connection) as certain hypergeometric type integrals over bounded chamber $\Delta_{\be}$. See \cite{koh} for details.

\begin{theorem}[{\cite[Theorem 6.1]{koh}}]
For generic $(\lambda,\hbar)$, the map 
\[ \Psi: H_{m}(Y_{n,m};\mcL_{\lambda,\hbar}) \rightarrow N[n\lambda-2m] \] defined by $\Psi(\Delta_{\be})= w_{\be}$ is a $\C B_{n}$-module isomorphism.
\end{theorem}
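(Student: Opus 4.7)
The plan is to follow Kohno's approach via the Schechtman-Varchenko hypergeometric integral construction. First, both spaces are finite-dimensional $\C$-vector spaces of the same dimension $d_{n,m}$: the left side by Proposition \ref{proposition:generic}(1), and for generic $\lambda$ the right side has $\{w_\be\}_{\be \in E_{n,m}}$ as a $\C$-basis (the $w_\be$ are nonzero null vectors of distinct weight, independent for generic $\lambda$). Thus $\Psi$ is automatically a $\C$-linear isomorphism on the basis level; the real content of the theorem is the $\C B_n$-equivariance.

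Second, I would realize $\Psi$ concretely through the hypergeometric integral representation of solutions to the KZ equation. For each $\be \in E_{n,m}$, define a horizontal section
\[ s_\be(z_1,\ldots,z_n) = \int_{\Delta_\be(z)} \Phi_{\lambda,\hbar}(z,t)\, \omega(z,t) \ \in\ M_\lambda^{\otimes n}, \]
where $\Phi_{\lambda,\hbar}(z,t)$ is the multivalued master function of exponents $2\hbar\lambda^2/(\pi\sqrt{-1})$, $-2\hbar\lambda/(\pi\sqrt{-1})$, $2\hbar/(\pi\sqrt{-1})$ on the respective factors $(z_i-z_j)$, $(t_i-z_l)$, $(t_i-t_j)$, so that its monodromy on the fiber direction realizes the local system $\mcL_{\lambda,\hbar}$, and $\omega(z,t)$ is an $M_\lambda^{\otimes n}$-valued rational $m$-form tailored via the Schechtman-Varchenko recipe so that $s_\be(z)$ takes values in $N[n\lambda-2m]$ and specializes at the basepoint to $w_\be$ (up to an explicit normalization absorbed in $\Psi$).

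Third, I would derive equivariance from the double interpretation of analytic continuation. Continuing $s_\be(z)$ along a loop in $Y_n$ representing a braid $\beta$ produces, on the one hand, the KZ monodromy $\rho_{KZ}(\beta)\cdot s_\be$ acting on the vector-valued result; on the other hand, it drags the chain $\Delta_\be$ to a new cycle in the fiber, whose class in $H_m(Y_{n,m};\mcL_{\lambda,\hbar})$ is precisely the $B_n$-action on $\Delta_\be$ coming from the bundle $\pi_{n,m}$. Equating the two viewpoints under the pairing $(\Delta,v)\mapsto \int_\Delta \Phi_{\lambda,\hbar}\, \omega(v)$ forces $\Psi(\beta\cdot\Delta_\be)=\beta\cdot w_\be$.

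The main obstacle is checking that the integrals $s_\be$ really land in $N[n\lambda-2m]$ and that the pairing is nondegenerate, so that $\Psi$ genuinely maps basis to basis up to invertible scalars. The first point reduces to verifying $E\cdot s_\be=0$ and the weight condition $H\cdot s_\be=(n\lambda-2m)s_\be$ via Stokes' theorem applied to an exact form constructed from the $\sltwo(\C)$-action, which is where the Schechtman-Varchenko form $\omega$ is designed to make boundary contributions vanish. The second point requires a genericity argument: restricting to a degeneration (for instance $\hbar\to 0$ with the points $p_i$ suitably scaled) the integrals decouple into products of beta functions, and nondegeneracy of the resulting transition matrix follows from nondegeneracy of the Shapovalov form on $M_\lambda^{\otimes n}$ at generic $\lambda$. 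The open dense set $U\subset(\C^*)^2$ of ``generic'' parameters is precisely the complement of the zero locus of this determinant.
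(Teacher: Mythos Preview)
The paper does not actually prove this theorem; it is quoted from Kohno \cite{koh}, and the paper only records the one-line idea: ``express solutions of KZ equation (a horizontal section of KZ connection) as certain hypergeometric type integrals over bounded chamber $\Delta_{\be}$.'' Your proposal is a faithful expansion of precisely this Schechtman--Varchenko/Kohno program: build horizontal sections of the KZ connection as hypergeometric integrals over the bounded chambers, identify the two monodromy actions (on the cycle side and on the value side) to get $B_{n}$-equivariance, and use Stokes-type arguments and genericity to land in the null-vector space with a nondegenerate pairing. So your approach matches what the paper indicates, and there is nothing further to compare.

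One small correction: your claim that the $w_{\be}$ are ``null vectors of distinct weight'' is wrong---by construction every $w_{\be}$ lies in $N[n\lambda-2m]$ and hence has the \emph{same} $H$-weight $n\lambda-2m$. Their linear independence comes instead from the fact that the leading term of $w_{\be}$ (the $i=0$ summand) is $v_{0}\otimes F^{e_{1}}v_{0}\otimes\cdots\otimes F^{e_{n-1}}v_{0}$, and these are distinct standard basis vectors of $M_{\lambda}^{\otimes n}$ as $\be$ ranges over $E_{n,m}$. This does not affect the rest of your outline.
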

  
 As a consequence, we are able to identify the Lawrence's representation $L_{n,m}$ and a quantum representation $W_{n,m}$.
 
\begin{corollary}[{ \cite[Conjecture 4]{jk} }]
\label{theorem:jkconjecture}
For $\beta \in B_{n}$, as a $(d_{n,m}\times d_{n,m})$-matrix, we have an equality
\[ L_{n,m}(\beta) = \rho^{W}_{n,m}(\beta)|_{s^{2}=\sq, -q^{-2}=\st} \]
\end{corollary}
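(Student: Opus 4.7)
The plan is to chain together three isomorphisms of $\C B_n$-modules at generic specialization, all of which have already been established, and then to promote the resulting numerical equality of matrices to a polynomial identity by a Zariski density argument. Concretely, fix a braid $\beta$ and a pair of generic parameters $(\lambda,\hbar)\in U\subset (\C^{*})^2$. By Proposition~\ref{proposition:generic}, the map $\iota_{*}$ identifies $H_{m}(\tC_{n,m};\Z)|_{\sq=e^{2\hbar\lambda},\,\st=-e^{-2\hbar}}$ with $H_{m}(Y_{n,m};\mcL_{\lambda,\hbar})$ as a $\C B_{n}$-module, and carries the standard multifork basis $\{\bF_{\be}\}$ to the bounded chamber basis $\{\Delta_{\be}\}$ (this is noted right after the proposition). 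By Kohno's theorem, $\Psi$ identifies $H_{m}(Y_{n,m};\mcL_{\lambda,\hbar})$ with the classical null vector space $N[n\lambda-2m]$ via $\Delta_{\be}\mapsto w_{\be}$. Finally, the Drinfel'd--Kohno lemma stated just above Proposition~\ref{proposition:generic} identifies $N[n\lambda-2m]$ with $W_{n,m}|_{q=e^{\hbar},\,s=e^{\hbar\lambda}}$ via $w_{\be}\leftrightarrow \Phi(v_{i(\be)})$, the basis used to define $\rho^{W}_{n,m}$.

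Composing these three $\C B_{n}$-equivariant isomorphisms, the basis $\{\bF_{\be}\}$ is sent to the basis $\{\Phi(v_{i(\be)})\}$, and hence the matrix of $\beta$ in the source and the matrix of $\beta$ in the target agree. The source matrix is by definition $L_{n,m}(\beta)|_{\sq=e^{2\hbar\lambda},\,\st=-e^{-2\hbar}}$, and the target matrix is $\rho^{W}_{n,m}(\beta)|_{q=e^{\hbar},\,s=e^{\hbar\lambda}}$. Thus for every $(\lambda,\hbar)\in U$, setting $s=e^{\hbar\lambda}$ and $q=e^{\hbar}$, the two matrices
\[
L_{n,m}(\beta)\bigl|_{\sq=s^{2},\,\st=-q^{-2}} \quad\text{and}\quad \rho^{W}_{n,m}(\beta)
\]
agree entrywise in $\C$.

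To conclude as an identity over $\mL=\Z[q^{\pm 1},s^{\pm 1}]$, note that both sides, after the substitution $\sq=s^{2},\,\st=-q^{-2}$, are Laurent polynomials in $s$ and $q$. The set $\{(e^{\hbar\lambda},e^{\hbar}) : (\lambda,\hbar)\in U\}$ is Zariski dense in $(\C^{*})^{2}$ because $U$ itself is open dense in $(\C^{*})^{2}$ and the map $(\lambda,\hbar)\mapsto(e^{\hbar\lambda},e^{\hbar})$ is dominant. Two Laurent polynomials in $s,q$ that agree on a Zariski dense subset of $(\C^{*})^{2}$ agree identically, which proves the corollary.

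The only real work is bookkeeping of the three basis identifications, and in particular tracking the sign in $\st=-e^{-2\hbar}$ through Proposition~\ref{proposition:generic} (where the sign convention already differs from Kohno's, as the remark following that proposition makes explicit): this is what produces the minus sign in the relation $-q^{-2}=\st$. The compatibility of the $\Phi$-deformed basis $\{\Phi(v_{i(\be)})\}$ of $W_{n,m}$ with the classical null vectors $\{w_{\be}\}$ is the content of the cited Drinfel'd--Kohno lemma, so we may invoke it as given; the principal novelty is that, once the conventions are aligned, the previously conjectural equality of \cite[Conjecture~4]{jk} falls out of Kohno's theorem combined with Proposition~\ref{proposition:generic} with essentially no further computation.
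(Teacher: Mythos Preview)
Your proposal is correct and follows essentially the same approach as the paper: chain together the three basis-preserving $\C B_n$-module isomorphisms (Proposition~\ref{proposition:generic}, Kohno's theorem, and the Drinfel'd--Kohno lemma) at generic specialization to get matrix equality, then pass to an identity of Laurent polynomials by density of the generic locus. Your write-up is in fact more explicit than the paper's, which simply says ``by regarding $(\lambda,\hbar)$ as variable we get a desired result'' where you spell out the Zariski density argument.
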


\begin{proof}[Proof of Corollary]
We have observed that for generic $(\lambda,\hbar) \in )\C^{*})^{2}$, there are isomorphisms of $\C B_{n}$-modules
\begin{eqnarray*}
 H_{m}(\tC_{n,m};\C)|_{\sq=e^{2\hbar\lambda}, \st=-e^{-2\hbar} } & \cong & H_{m}(\widetilde{Y}_{n,m};\C)|_{\sq=e^{2\hbar\lambda}, \st=-e^{-2\hbar}} \cong [n\lambda -2m] \\
& \cong & W_{n,m}|_{s=e^{\hbar\lambda},q=e^{\hbar}}
\end{eqnarray*}
which preserve the basis indexed by $E_{n,m}$.
Since the set $U$ of generic parameters $(\lambda,\hbar)$ are open dense, by regarding $(\lambda,\hbar)$ as variable we get a desired result. 
\end{proof}

\section{The dual Garside length formula}
\label{sec:proof}
In this section we prove that representations $L_{n,m}$ and $\rho^{V}_{n,m}$ naturally and directly detect the dual Garside length.

\subsection{Primary calculations}

Before proving the dual Garside length formula, we need to compute the matrices for dual simple elements. 

We first calculate the matrix $\rho^{V}_{n,m}(\sigma_{i}\cdots \sigma_{j-2}\sigma_{j-1})$ for $1\leqslant i< j \leqslant n$.

For $1\leqslant i < j \leqslant n$ and $\be =(e_{1},\ldots, e_{n}) \in E_{n+1,m}$, let 
\[ E''_{+}=E''_{+}(\be,i,j) = \left\{\bk=(k_{1},\ldots,k_{n}) \in E_{n+1,m} \: \begin{array}{|ll}
k_{t}=0 & (t<i, j \leqslant t) \\
 k_{t} \leqslant e_{t} &(i\leqslant t < j) 
\end{array}
\right\}.
 \]
and for $\bk \in E''_{+}$, let 
\begin{eqnarray*}
 \be_{+}(\bk) & = & (e_{1},\ldots, e_{i-1}, e_{j}+ \sum_{t=i}^{j-1} k_{t}, e_{i}-t_{i},\ldots,e_{j-1}-t_{j-1},e_{j+1},\ldots,e_{n} ) \\
 & \in & E_{n+1,m}. 
\end{eqnarray*}

\begin{lemma}
\label{lemma:plus}
For $\be \in E_{n+1,m}$ let us write 
\[ \sigma_{i}\cdots \sigma_{j-2}\sigma_{j-1}(v_{\be}) = \sum_{\be' \in E_{n+1,m}} x_{\be'} v_{\be'}\;\;\; (x_{\be'} \in \Z[s^{\pm 1},q^{\pm1}] ) \]
Then 
\[ M_{s}(x_{\be'}) = m_{s}(x_{\be'}) = 0 \]
if $\be' \not \in \{ \be_{+}(\bk) \: | \: \bk \in E''_{+}(\be,i,j)\}$ and
\[ \left\{\begin{array}{l}
M_{s} (x_{\be'}) \leqslant -2 \sum_{t=i}^{j-1}(e_{t}-k_{t}) \\
m_{s} (x_{\be'}) \geqslant -2 \sum_{t=i}^{j-1}e_{t}
\end{array}
\right.\]
if $\be'=\be_{+}(\bk)$ for some $\bk \in E''_{+}(\be,i,j)$.
\end{lemma}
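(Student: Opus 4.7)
My plan is to iterate the $R$-matrix formula \eqref{eqn:R-matrix} step by step. With the left-action convention, $\sigma_{i}\sigma_{i+1}\cdots\sigma_{j-1}$ acts on $V^{\otimes n}$ by first applying $R$ to positions $(j-1,j)$, then to $(j-2,j-1)$, and so on down to $(i,i+1)$. A simple induction on $\ell=1,\ldots,j-i$ shows that immediately after the $\ell$-th step the entry at position $j-\ell$ has become $e_{j}+k_{j-1}+k_{j-2}+\cdots+k_{j-\ell}$ while the entry at position $j-\ell+1$ has become $e_{j-\ell}-k_{j-\ell}$, where $k_{j-\ell}$ is the summation index of the $R$-matrix expansion at that step, ranging over $0\leqslant k_{j-\ell}\leqslant e_{j-\ell}$. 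After the final step, the tensor-basis vectors appearing with non-zero coefficient are indexed exactly by $\be_{+}(\bk)$ for $\bk\in E''_{+}(\be,i,j)$, which gives $x_{\be'}=0$, and hence trivial bounds on $M_{s}$ and $m_{s}$, whenever $\be'\not\in\{\be_{+}(\bk)\}$.

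For $\be'=\be_{+}(\bk)$ I would write $x_{\be'}$ as the product of the $j-i$ prefactors from the iterated $R$-matrix applications and the renormalization factor $s^{\sum_{t}te_{t}-\sum_{t}t(\be_{+}(\bk))_{t}}$ coming from the definition $v_{\be}=s^{\sum_{t}te_{t}}v_{e_{1}}\otimes\cdots\otimes v_{e_{n}}$. The $\ell$-th $R$-matrix is applied to $v_{a_{\ell}}\otimes v_{b_{\ell}}$ with $a_{\ell}=e_{j-\ell}$ and $b_{\ell}=e_{j}+\sum_{m<\ell}k_{j-m}$; by \eqref{eqn:R-matrix}, its $k_{j-\ell}$-summand has $s$-degree in the interval $[-(a_{\ell}+b_{\ell})-k_{j-\ell},\,-(a_{\ell}+b_{\ell})+k_{j-\ell}]$. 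A double-sum rearrangement gives $\sum_{\ell}(a_{\ell}+b_{\ell})=\sum_{t=i}^{j-1}e_{t}+(j-i)e_{j}+\sum_{t=i+1}^{j-1}(t-i)k_{t}$, and a separate telescoping calculation gives the renormalization exponent $(j-i)e_{j}-\sum_{t=i}^{j-1}e_{t}+\sum_{t=i}^{j-1}(t+1-i)k_{t}$.

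Adding the two contributions, the $(j-i)e_{j}$ terms cancel, the $-\sum e_{t}$ doubles, and the two non-trivial $k_{t}$-sums differ by exactly $\sum_{t=i}^{j-1}k_{t}$, which the $\pm\sum_{t=i}^{j-1}k_{t}$ spread from the $R$-matrix absorbs, yielding $0$ on the lower side and $2\sum_{t=i}^{j-1}k_{t}$ on the upper side. This leaves precisely $-2\sum_{t=i}^{j-1}e_{t}$ and $-2\sum_{t=i}^{j-1}(e_{t}-k_{t})$ as claimed. The step I expect to spend most care on is this coordinated cancellation of the $k_{t}$-coefficients between the $b_{\ell}$-dependent part of the $R$-matrix exponent and the telescoping renormalization exponent; the cancellation works precisely because the normalization $s^{\sum_{t}te_{t}}$ in the definition of $v_{\be}$ was chosen to match the $s$-grading produced by the $R$-matrix, so the lemma is ultimately a bookkeeping verification that this normalization realizes the cleanest possible $s$-degree bounds.
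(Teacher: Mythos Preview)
Your proof is correct and follows essentially the same strategy as the paper: both arguments iterate the $R$-matrix formula \eqref{eqn:R-matrix} generator by generator and track the resulting $s$-degree. The only difference is organizational: the paper verifies the case $j-i=1$ explicitly (obtaining $M_{s}=-2(e_{j-1}-k)$ and $m_{s}=-2e_{j-1}$) and then asserts that the general case follows ``by applying the inequality for the case $(j-i)=1$ repeatedly,'' whereas you carry out that iteration in full, computing $\sum_\ell(a_\ell+b_\ell)$ and the global renormalization exponent and checking the cancellations directly. Your more explicit bookkeeping has the advantage of making transparent why the $k_t$-terms from $\sum_\ell b_\ell$ and from the renormalization combine to leave exactly $\sum_t k_t$, which is then absorbed by the $\pm\sum_t k_t$ spread of the product $\prod(sq^{\cdots}-s^{-1}q^{\cdots})$; the paper's version hides this inside the inductive step. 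Either way, since the map $\bk\mapsto\be_{+}(\bk)$ is injective (one recovers $k_t=e_t-\be'_{t+1}$), there is no issue of several $\bk$ contributing to the same $\be'$, and the bounds follow.
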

\begin{proof}
It is sufficient to prove lemma for the case $(j-i)=1$, since the general case $(j-i)>1$ is proved by applying the inequality for the case $(j-i)=1$ repeatedly.

By definition of the action of $B_{n}$ on $V^{\otimes n}$ (See (\ref{eqn:R-matrix}) ),
\begin{eqnarray*} \sigma_{j-1}(v_{e_{1}}\otimes \cdots \otimes v_{e_{n}}) & = & \! 
\sum_{k=0}^{e_{j-1}} s^{-(e_{j-1}+e_{j})} F_{e_{j-1},e_{j},k}(q) \prod_{t=0}^{k-1}(sq^{-t-j}-s^{-1}q^{t+j}) \\
& & \! (v_{e_{1}} \otimes \cdots \otimes v_{e_{j-2}} \otimes v_{e_{j}+k} \otimes v_{e_{j-1}-k}\otimes v_{e_{j+1}} \otimes \cdots \otimes v_{e_{n}})
\end{eqnarray*}
where $F_{e_{j-1},e_{j},k}$ is a non-zero Laurent polynomial of the variable $q$ defined by (\ref{eqn:F})
Thus, if $\be' = (e_{1},\ldots,e_{j-2},e_{j}+k, e_{j-1}-k, e_{j+1},\ldots,e_{n})$ then
\begin{eqnarray*}
x_{\be'}& = & s^{\sum_{t=1}^{n} te_{t}}   F_{e_{j-1},e_{j},k}(q)  s^{-(e_{j-1}+e_{j})}\prod_{t=0}^{k-1}(sq^{-t-j}-s^{-1}q^{t+j})  \\
 &  & \;\;\; \;\;\;\;\;\;\;\; s^{- \sum_{t=1}^{j-2} te_{t} - (j-1)(e_{j}+k) - j(e_{j-1}-k) - {\sum_{t=j+1}^{n} te_{t}}} \\
 & = &  F_{e_{j-1},e_{j},k}(q) \prod_{t=0}^{k-1}(sq^{-t-j}-s^{-1}q^{t+j})\\
& & \;\;\; \;s^{(j-1)e_{j-1} + je_{j} -(e_{j-1}+e_{j})  - (j-1)(e_{j}+k) - j(e_{j-1}-k)} \\
& = &
F_{e_{j-1},e_{j},k}(q)\prod_{t=0}^{k-1}(sq^{-t-j}-s^{-1}q^{t+j})  s^{-2e_{j-1} + k} 
\end{eqnarray*}
so \[
 M_{s}(x_{\be'}) = -2(e_{j-1}-k), \;\;\; 
 m_{s}(x_{\be'}) = -2 e_{j-1}.
\]
Otherwise, $x_{\be'}=0$ hence $M_{s}(x_{\be'}) = m_{s}(x_{\be'})=0$.
\end{proof}

We prove similar formula for the braid $(\sigma_{i}\cdots \sigma_{j-2}\sigma_{j-1})^{-1}$.

For $1\leqslant i < j \leqslant n$ and $\be =(e_{1},\ldots, e_{n}) \in E_{n+1,m}$, let 
\[ E''_{-}=E''_{-}(\be,i,j) = \left\{\bk=(k_{1},\ldots,k_{n}) \in E_{n+1,m} \: \begin{array}{|ll} k_{t}=0 & (t \leqslant i, j < t) \\ k_{t}  \leqslant e_{t} & (i < t \leqslant j)
\end{array} \right\} \]
and for $\bk \in E''_{-}$, let 
\begin{eqnarray*}
 \be_{-}(\bk) & = & (e_{1},\ldots, e_{i-1},e_{i+1}-t_{i+1},\ldots,e_{j}-t_{j},e_{i} + \sum_{t=i+1}^{j} k_{t}, e_{j+1},\ldots,e_{n} ) \\
 & \in & E_{n+1,m}.
 \end{eqnarray*}

By similar argument as in the proof of Lemma \ref{lemma:plus}, we get the corresponding results for $(\sigma_{i}\cdots \sigma_{j-2}\sigma_{j-1})^{-1}$.
 
\begin{lemma}
\label{lemma:minus}
For $\be \in E_{n+1,m}$ let us write 
\[ (\sigma_{i}\cdots \sigma_{j-2}\sigma_{j-1})^{-1}(v_{\be}) = \sum_{\be' \in E_{n+1,m}} x_{\be'} v_{\be'} \;\;\; (x_{\be'} \in \Z[s^{\pm 1}, q^{\pm 1}]) .\]
Then 
\[ M_{s}(x_{\be'}) = m_{s}(x_{\be'}) = 0 \]
if $\be' \not \in \{ \be_{-}(\bk) \: | \: \bk \in E''_{-}(\be,i,j)\}$ and
\[ \left\{\begin{array}{l}
M_{s} (x_{\be'}) \leqslant 2 \sum_{t=i+1}^{j}e_{t} \\
m_{s} (x_{\be'}) \geqslant 2 \sum_{t=i+1}^{j}(e_{t}-k_{t})
\end{array}
\right.\]
if $\be'=\be_{-}(\bk)$ for $\bk \in E''_{-}(\be,i,j)$.
\end{lemma}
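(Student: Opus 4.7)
The plan is to mirror the argument of Lemma~\ref{lemma:plus} with $\sigma_{j-1}$ replaced by $\sigma_{j-1}^{-1}$. First, I would reduce to the elementary case $j=i+1$ by writing $(\sigma_i\cdots\sigma_{j-1})^{-1}=\sigma_{j-1}^{-1}\cdots\sigma_i^{-1}$ and applying the single-braid estimate iteratively. At leading order, $\sigma_t^{-1}$ swaps the $t$-th and $(t+1)$-st tensor factors while transferring some amount $k_{t+1}$ from position $t+1$ to position $t$; composing left-to-right produces a term whose multi-index is exactly $\be_-(\bk)$, with the constraint $k_{t+1}\leqslant e_{t+1}$ for $i\leqslant t<j$ coming from the summation range in the inverse $R$-matrix expansion.

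For the base case, I would write down the explicit action of $\sigma_{j-1}^{-1}$ on $V\otimes V$ by inverting~(\ref{eqn:R-matrix}). The resulting formula is structurally symmetric to the $R$-matrix action: the coefficient of $v_{e_j-k}\otimes v_{e_{j-1}+k}$ in $\sigma_{j-1}^{-1}(v_{e_{j-1}}\otimes v_{e_j})$ is $s^{e_{j-1}+e_j}$ times a nonzero Laurent polynomial in $q$ times a product of $k$ factors of the form $s^{-1}q^{c}-sq^{-c}$, whose total $s$-degree ranges between $-k$ and $+k$. Combining this with the shift $s^{\sum_t te_t-\sum_t t(\be')_t}$ induced by the normalization $v_\be=s^{\sum_t te_t}(v_{e_1}\otimes\cdots\otimes v_{e_n})$, the same arithmetic as in Lemma~\ref{lemma:plus} yields $x_{\be'}=s^{2e_j-k}\cdot(\text{Laurent polynomial in }q)\cdot\prod(\cdots)$, whence $M_s(x_{\be'})\leqslant 2e_j$ and $m_s(x_{\be'})\geqslant 2(e_j-k)$.

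Iterating through $\sigma_{j-1}^{-1}\cdots\sigma_i^{-1}$ and summing these per-step bounds gives the stated inequalities: at step $\sigma_t^{-1}$, position $t+1$ still carries its original value $e_{t+1}$ because earlier steps have modified only positions $\leqslant t$, so it contributes an interval $[2(e_{t+1}-k_{t+1}),2e_{t+1}]$ to the $s$-degree. Vanishing of $x_{\be'}$ outside the image of $\be_-$ is automatic, since each elementary step preserves the product support structure. The main obstacle is the correct bookkeeping of signs and $q$-exponents in the inverse $R$-matrix formula; once that is in hand the rest is a mechanical telescoping, and the symmetry with Lemma~\ref{lemma:plus} guarantees that the bounds come out as claimed.
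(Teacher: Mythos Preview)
Your proposal is correct and follows exactly the approach the paper intends: the paper's own proof of this lemma consists solely of the sentence ``by similar argument as in the proof of Lemma~\ref{lemma:plus},'' and you have supplied precisely that argument. Your key observations are the right ones---that $(\sigma_i\cdots\sigma_{j-1})^{-1}=\sigma_{j-1}^{-1}\cdots\sigma_i^{-1}$ is applied starting with $\sigma_i^{-1}$, that $R^{-1}(v_a\otimes v_b)$ expands in terms $v_{b-k}\otimes v_{a+k}$ with leading $s$-factor $s^{a+b}$ (the inverse of the $s^{-(a+b)}$ in $R$), and that at step $\sigma_t^{-1}$ the entry in position $t+1$ is still the original $e_{t+1}$, so the per-step bound $[2(e_{t+1}-k_{t+1}),\,2e_{t+1}]$ telescopes to the stated inequalities.
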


By using the above calculations, we prove the following results on the matrix $\rho^{V}_{n,m}$. 

\begin{lemma}
\label{lemma:dualsimple_quantum}
If $\beta \in B_{n}$ is a dual-simple element, then 
\[ -2m \leqslant m_{s}(\rho^{V}_{n,m}(\beta) ) )\leqslant M_{s}(\rho^{V}_{n,m}(\beta) ) ) \leqslant 0.\]
\end{lemma}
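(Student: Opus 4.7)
Plan: Since $\rho^V_{n,m}(\beta)$ is the matrix whose $(\be',\be)$-entry is the coefficient $x_{\be',\be}$ in the expansion $\rho^V_{n,m}(\beta)(v_\be)=\sum_{\be'}x_{\be',\be}v_{\be'}$, the statement reduces to showing, for every $\be\in E_{n+1,m}$ and every $\be'$, that $-2m\leqslant m_s(x_{\be',\be})\leqslant M_s(x_{\be',\be})\leqslant 0$. The argument proceeds in two stages: first verify the bound for each single dual Garside generator $a_{i,j}$, then extend it to arbitrary dual-simple elements by exploiting their canonical description in terms of non-crossing partitions of $\{1,\ldots,n\}$.

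For the base case $\beta=a_{i,j}=(\sigma_{i+1}\cdots\sigma_{j-1})^{-1}\sigma_i(\sigma_{i+1}\cdots\sigma_{j-1})$, the plan is to apply Lemma~\ref{lemma:plus} twice (once to the factor $\sigma_{i+1}\cdots\sigma_{j-1}$ acting on $v_\be$, and once to $\sigma_i$ acting on the resulting intermediate vector $v_{\be'}$) and then Lemma~\ref{lemma:minus} once (to $(\sigma_{i+1}\cdots\sigma_{j-1})^{-1}$ acting on $v_{\be''}$), composing the $s$-degree bounds stage by stage. The decisive cancellation is that the positive upper-bound contribution $+2\sum_{t=i+1}^{j-1}(\be''_t-k''_t)$ produced by Lemma~\ref{lemma:minus} exactly matches the negative contribution $-2\sum_{t=i+1}^{j-1}(e_t-k_t)$ produced by the initial piece $\sigma_{i+1}\cdots\sigma_{j-1}$, leaving $M_s\leqslant -2(e_i-k^*)\leqslant 0$ where $k^*\in[0,e_i]$ records the $\sigma_i$-index; the analogous analysis on the minimum side, using $k''_t\leqslant \be''_t=e_{t-1}-k_{t-1}$, yields $m_s\geqslant -2\sum_{t=i}^{j-1}e_t\geqslant -2m$.

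For a general dual-simple $\beta$, the plan is to use the classical Birman--Ko--Lee bijection between $[1,\delta]$ and non-crossing partitions of $\{1,\ldots,n\}$: $\beta$ decomposes as a product of block-cycles, each a product of band generators $a_{b_r,b_{r+1}}$ attached to consecutive elements of a block. The non-crossing condition guarantees that the bands belonging to distinct blocks act on either disjoint or nested portions of the configuration, and so, by iterating the base-case estimate along the block-by-block decomposition (keeping careful track of how each band reduces the available $e$-mass for subsequent bands), the individual negative $s$-contributions combine to at most $-2\sum_t e_t=-2m$ rather than blowing up; the upper bound $M_s\leqslant 0$ passes through composition trivially because $0+0=0$. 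The main obstacle is precisely this composition step: a naive concatenation of the base-case estimates would produce $m_s\geqslant -2km$ for a product of $k$ band generators, which is far too weak, so the non-crossing structure must be used essentially to show that each band's negative contribution is bounded by the reduction it induces in the $e$-mass accessible to later bands.
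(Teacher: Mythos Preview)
Your base-case computation for a single band generator $a_{i,j}$ via the three-stage factorisation is correct (up to a harmless index shift: in stage three, Lemma~\ref{lemma:minus} applied to $(\sigma_{i+1}\cdots\sigma_{j-1})^{-1}$ runs over $t=i+2,\ldots,j$, and one then uses $\sum_{t=i+2}^{j}\be''_{t}=\sum_{t=i+1}^{j-1}(e_{t}-k_{t})$ to get the cancellation you describe). The observation that $M_{s}\leqslant 0$ passes to products because $0+0=0$ is also fine.

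The gap is in your handling of the lower bound $m_{s}\geqslant -2m$ for a general dual-simple element. You correctly identify this as ``the main obstacle'' and propose to resolve it by tracking ``$e$-mass'' through a non-crossing-partition decomposition, but you do not actually carry this out; the sketch you give is plausible but vague, and the bookkeeping required to make it rigorous is substantial.

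The paper bypasses this difficulty with a much simpler decomposition. Since $\beta\preccurlyeq_{\Sigma^{*}}\delta$, one may write
\[
\beta \;=\; a_{i_{k},j_{k}}^{-1}\cdots a_{i_{1},j_{1}}^{-1}\,\delta.
\]
The same careful three-stage tracking you performed, applied instead to the factorisation $a_{i,j}^{-1}=(\sigma_{i}\cdots\sigma_{j-1})^{-1}(\sigma_{i+1}\cdots\sigma_{j-1})$, yields $m_{s}\bigl(\rho^{V}_{n,m}(a_{i,j}^{-1})\bigr)\geqslant 0$; and Lemma~\ref{lemma:plus} with $i=1$, $j=n$ gives $m_{s}\bigl(\rho^{V}_{n,m}(\delta)\bigr)\geqslant -2m$ directly. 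Composition is then trivial: a string of factors each with $m_{s}\geqslant 0$ followed by a single factor with $m_{s}\geqslant -2m$ gives $m_{s}\bigl(\rho^{V}_{n,m}(\beta)\bigr)\geqslant -2m$, with no need for non-crossing structure or mass-tracking. Your route may well be completable, but it is the scenic path where a shortcut exists; the paper's trick is to push all the negativity into the single factor $\delta$.
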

\begin{proof}
Each dual simple element $x$ is written as $ x= a_{i_{k},j_{k}}^{-1}\cdots a_{i_{1},j_{1}}^{-1}\delta$. Thus, to prove lemma, it is sufficient to check the following two inequalities.
\begin{enumerate}
\item $-2m \leqslant m_{s}(\rho^{V}_{n,m}(\delta) ) M_{s}(\rho^{V}_{n,m}(\delta) ) \leqslant 0$.
\item $0 \leqslant m_{s}(\rho_{n,m}^{V}(a_{i,j}^{-1})) M_{s}(\rho_{n,m}^{V}(a_{i,j}^{-1}))\leqslant 2m$.
\end{enumerate}
 
The inequality (1) follows from Lemma \ref{lemma:plus}. 
Since 
\[ a_{i,j}^{-1} = (\sigma_{i}\cdots \sigma_{j-2}\sigma_{j-1})^{-1} (\sigma_{i+1} \cdots \sigma_{j-2}\sigma_{j-1}) \]
the inequality (2) follows from Lemma \ref{lemma:plus} and Lemma \ref{lemma:minus}. 
\end{proof}

The corresponding result for Lawrence's representation can be obtained from Lemma \ref{lemma:dualsimple_quantum} by using (\ref{eqn:WV}), Lemma \ref{lemma:phi} and Corollary \ref{theorem:jkconjecture}.

\begin{lemma}
\label{lemma:dualsimple_homological}
For $\beta \in B_{n}$, we have an inequality
\[ \frac{1}{2} m_{s}(\rho^{V}_{n,m}(\beta ) ) \leqslant m_{\sq}(L_{n,m}(\beta ) ) \leqslant M_{\sq}(L_{n,m}(\beta ) ) \leqslant \frac{1}{2} M_{s}(\rho^{V}_{n,m}(\beta ) ) . \]
In particular, if $\beta \in B_{n}$ is a dual-simple element, then 
\[ -m \leqslant m_{\sq}(L_{n,m}(\beta)) \leqslant M_{\sq}(L_{n,m}(\beta)) \leqslant m\]
holds.
\end{lemma}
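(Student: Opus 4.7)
The plan is to transfer the estimate from $\rho^{V}_{n,m}$ to $L_{n,m}$ through the intermediate representation $\rho^{W}_{n,m}$, using the two bridges provided by Corollary~\ref{theorem:jkconjecture} and equation (\ref{eqn:WV}). The step from $\rho^{W}_{n,m}$ to $L_{n,m}$ accounts for the factor of $\tfrac{1}{2}$ in the statement, while the step from $\rho^{V}_{n,m}$ to $\rho^{W}_{n,m}$ only weakens the degree bounds by potential cancellations (giving inequalities rather than equalities).

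First I would invoke Corollary~\ref{theorem:jkconjecture}, which gives $L_{n,m}(\beta) = \rho^{W}_{n,m}(\beta)\big|_{s^{2}=\sq,\,-q^{-2}=\st}$. This identity already tells us that every entry of $\rho^{W}_{n,m}(\beta)$ is a Laurent polynomial in $s^{2}$ and $q^{2}$, so under $s^{2}\mapsto\sq$ an $s$-monomial $s^{2a}$ corresponds bijectively to the $\sq$-monomial $\sq^{a}$. The companion substitution $q^{2}\mapsto -\st^{-1}$ is an injective ring homomorphism $\Z[q^{\pm 2}]\to \Z[\st^{\pm 1}]$, so no nonzero $s^{2}$-isotypic component can collapse under the specialization. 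Thus one obtains the exact equalities $m_{\sq}(L_{n,m}(\beta)) = \tfrac{1}{2}\,m_{s}(\rho^{W}_{n,m}(\beta))$ and $M_{\sq}(L_{n,m}(\beta)) = \tfrac{1}{2}\,M_{s}(\rho^{W}_{n,m}(\beta))$.

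Next I would use (\ref{eqn:WV}): $\rho^{W}_{n,m}(\beta) = \pi'\circ \rho^{V}_{n,m}(\beta)\circ M_{\Phi}$. In our chosen bases the projection $\pi'$ is a $\{0,1\}$-matrix, and by Lemma~\ref{lemma:phi} the entries of $M_{\Phi}$ lie in $\Z[q^{\pm 1}]$, so neither outer factor involves $s$. Therefore each entry of $\rho^{W}_{n,m}(\beta)$ is a $\Z[q^{\pm 1}]$-linear combination of entries of $\rho^{V}_{n,m}(\beta)$, and the routine matrix-product degree estimate gives
\[ m_{s}(\rho^{W}_{n,m}(\beta)) \geqslant m_{s}(\rho^{V}_{n,m}(\beta)), \qquad M_{s}(\rho^{W}_{n,m}(\beta)) \leqslant M_{s}(\rho^{V}_{n,m}(\beta)). \]
Combining this with the previous paragraph yields the first pair of inequalities in the lemma. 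The ``in particular'' clause then follows immediately from Lemma~\ref{lemma:dualsimple_quantum}: for dual-simple $\beta$ we have $-2m \leqslant m_{s}(\rho^{V}_{n,m}(\beta))$ and $M_{s}(\rho^{V}_{n,m}(\beta)) \leqslant 0$, whence $-m \leqslant m_{\sq}(L_{n,m}(\beta))$ and $M_{\sq}(L_{n,m}(\beta)) \leqslant 0 \leqslant m$.

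There is no serious obstacle; the argument is essentially bookkeeping. The only delicate point is the first paragraph's verification that the two-variable substitution of Corollary~\ref{theorem:jkconjecture} preserves $s$-degree information up to the factor $\tfrac{1}{2}$, which relies on $s^{2}\mapsto \sq$ being a graded isomorphism of the $s^{2}$-subring together with injectivity of the accompanying $q$-substitution.
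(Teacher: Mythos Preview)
Your proposal is correct and follows essentially the same route as the paper: use equation~(\ref{eqn:WV}) together with Lemma~\ref{lemma:phi} to pass from $\rho^{V}_{n,m}$ to $\rho^{W}_{n,m}$, then Corollary~\ref{theorem:jkconjecture} to pass to $L_{n,m}$, and finally invoke Lemma~\ref{lemma:dualsimple_quantum} for the ``in particular'' clause. The only minor difference is that you argue for exact equalities $m_{\sq}(L_{n,m}(\beta))=\tfrac{1}{2}m_{s}(\rho^{W}_{n,m}(\beta))$ via injectivity of the $q$-substitution, whereas the paper is content with the one-sided inequalities (which already suffice); your extra observation is correct but not needed for the lemma as stated.
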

\begin{proof}
Recall the equality (\ref{eqn:WV}), $\rho^{W}_{m,n} (\beta) = \pi' \rho^{V}_{m,n}(\beta) M_{\Phi}$. 
By Lemma \ref{lemma:phi}, the matrix $M_{\phi}$ does not involve the variable $s$,
\[ m_{s}(\rho^{V}_{m,n}(\beta) ) \leqslant m_{s}(\rho^{V}_{m,n}(\beta ) M_{\Phi} ). \]
Since the matrix $\pi' \rho^{V}_{m,n}(\beta) M_{\Phi}$ is a submatrix of $\rho^{V}_{m,n}(\beta) M_{\Phi}$,
\[ m_{s} ( \rho^{V}_{m,n}(\beta ) M_{\Phi}) \leqslant m_{s}( \pi' \rho^{V}_{m,n}(\beta ) M_{\Phi } )  = m_{s}(\rho^{W}_{m,n}(\beta ) ) \]
hence
\[  m_{s}(\rho^{V}_{m,n}(\beta) ) \leqslant m_{s}(\rho^{W}_{m,n}(\beta ) ). \]
By Corollary \ref{theorem:jkconjecture}, $L_{m,n}(\beta) = \rho^{W}_{m,n}(\beta)|_{s^{2}=\sq, -q^{-2}=\st}$ so we conclude
\[ \frac{1}{2} m_{s}(\rho^{V}_{m,n}(\beta)) \leqslant m_{\sq}(L_{n,m}(\beta)) \]
The inequality between $M_{s}$ and $M_{\sq}$ is proved in a similar way.
\end{proof}

\subsection{Dual Garside length formula for Lawrence's representation}
\label{sec:Lformula}

\begin{theorem}
\label{theorem:L}
Let $L_{n,m}$ $(m>1)$ be Lawrence's representation and $\beta \in B_{n}$. Then,
\begin{enumerate}
\item $m\sup_{\Sigma^{*}}(\beta) = -m_{\sq}(L_{n,m}(\beta))$.
\item $m\inf_{\Sigma^{*}}(\beta) = M_{\sq}(L_{n,m}(\beta))$.
\item $m\,l_{\Sigma^{*}}(\beta) = \max \{0,-m_{\sq}(L_{n,m}(\beta))\} - \min\{0,-M_{\sq}(L_{n,m}(\beta))\}$. 
\end{enumerate}
\end{theorem}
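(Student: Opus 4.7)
The plan is to prove parts (1) and (2) in parallel; part (3) will then follow immediately by substituting (1) and (2) into the formula of Proposition \ref{prop:dGlength}.

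For the upper bound direction, namely $m_{\sq}(L_{n,m}(\beta)) \geq -m\sup_{\Sigma^{*}}(\beta)$ (and dually $M_{\sq}(L_{n,m}(\beta)) \leq -m\inf_{\Sigma^{*}}(\beta)$), I will decompose $\beta$ in the dual Garside normal form and repeatedly apply Lemma \ref{lemma:dualsimple_homological}. Concretely, writing $\beta = x_1 x_2 \cdots x_k \cdot \delta^{p}$ with $p = \inf_{\Sigma^{*}}(\beta)$, $k = \sup_{\Sigma^{*}}(\beta) - p \geq 0$, and each $x_i$ non-trivial dual simple, the bound $m_{\sq}(L_{n,m}(x_i)) \geq -m$ from Lemma \ref{lemma:dualsimple_homological} together with the subadditivity of $m_{\sq}$ under matrix products handles the dual simple factors. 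When $p \geq 0$, the factor $L_{n,m}(\delta)^p$ contributes $m_{\sq} \geq -mp$ directly, using that $\delta$ is itself dual simple. When $p < 0$, I will exploit the centrality of $\delta^n$ in $B_n$: the representation $L_{n,m}$ is irreducible for generic parameters, so $L_{n,m}(\delta^n)$ must be a scalar matrix $c \cdot I$ with $c$ a Laurent monomial, and comparing $\sq$-degrees with the bounds for $L_{n,m}(\delta)$ forces the sharp estimate $m_{\sq}(L_{n,m}(\delta^{-1})) \geq m$, completing the upper bound.

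For the reverse inequality (equality), I will adapt the topological argument of \cite{iw} from the case $m=2$ to arbitrary $m \geq 1$. Each matrix entry of $L_{n,m}(\beta)$ in the standard multifork basis is, up to change of basis to noodle duals, a noodle-fork pairing $\langle N_i, \beta \bF_{\be} \rangle$. I will choose $\bF_{\be}$ to be a straight multifork $\bF_j$ (all $m$ tines parallel, based at the puncture $p_j$) and $N_i$ a standard noodle around a puncture selected so that $\beta \bF_j$ winds maximally through $N_i$. The geometric content is a reformulation of the statement $\sup_{\Sigma^{*}}(\beta) = s$ to say that, for appropriate $i$ and $j$, the loop produced by the concatenated paths $\{C_{\ell}\}\{B_{\ell}\}\{A_{\ell}\}$ of Section \ref{sec:Lawrence} (Figure \ref{fig:NFpairing}) has $\sq$-coordinate equal to $-ms$ under $\alpha \co \pi_1(C_{n,m}) \to \langle \sq, \st \rangle$. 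Bigelow's key lemma (Lemma \ref{lemma:key}), applicable precisely because the tines of $\bF_j$ are parallel, then ensures that the contributions at this extremal $\sq$-degree all carry the same sign, so $\sq^{-ms}$ appears in $\langle N_i, \beta \bF_j \rangle$ with nonzero coefficient.

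The hardest step will be the geometric reformulation of $\sup_{\Sigma^{*}}$ in terms of winding numbers of parallel multifork tines. Each of the $m$ tines of $\bF_j$, being parallel, contributes additively and identically to the $\sq$-coordinate of $\alpha$, producing the multiplicative factor $m$, while the pairwise crossings among the tines contribute only to the $\st$-coordinate and do not affect the $\sq$-exponent. The precise identification of $\sup_{\Sigma^{*}}(\beta)$ with this single-tine winding maximum (so that $m\sup_{\Sigma^{*}}(\beta)$ is the multitine winding) is the heart of the matter and is what links dual Garside combinatorics to the homology of the covering $\tC_{n,m}$. Once this is verified, the remainder of the argument is essentially a direct transcription of \cite{iw}.
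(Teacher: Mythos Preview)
Your overall strategy matches the paper's: the hard inequality $m_{\sq}(L_{n,m}(\beta)) \leqslant -m\sup_{\Sigma^{*}}(\beta)$ via straight multiforks, standard noodles, and Lemma~\ref{lemma:key}, and the reverse inequality via Lemma~\ref{lemma:dualsimple_homological}. Two points of comparison are worth noting.

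First, on the hard direction: the paper does \emph{not} identify matrix entries with pairings ``up to change of basis to noodle duals.'' Instead it keeps the standard multifork basis, writes $\beta(\bS)=\sum_{\be}x_{\be}\bF_{\be}$ so that the $x_{\be}$ \emph{are} genuine matrix entries, and then bounds them through $\langle N,\beta(\bS)\rangle=\sum_{\be}x_{\be}\langle N,\bF_{\be}\rangle$ together with the elementary estimate $m_{\sq}(\langle N,\bF_{\be}\rangle)\geqslant -m$. This is where the offset you omit appears: the pairing estimate is $m_{\sq}(\langle N,\beta(\bS)\rangle)\leqslant -m\sup_{\Sigma^{*}}(\beta)-m$, and the extra $-m$ cancels against the $-m$ from $\langle N,\bF_{\be}\rangle$. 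The bridge you call ``winding numbers'' is exactly the \emph{wall-crossing labelling} of curve diagrams from \cite{iw}; the paper cites \cite[Theorem~3.3]{iw} for the identification of its minimum with $-\sup_{\Sigma^{*}}(\beta)$, which is precisely the ``hardest step'' you flag.

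Second, on the easy direction: the paper dispatches this in one line from Lemma~\ref{lemma:dualsimple_homological}, while you add a genuine argument for $\inf_{\Sigma^{*}}(\beta)<0$ via centrality of $\delta^{n}$ and irreducibility. Your route does work---once $L_{n,m}(\delta^{n})=c\cdot I$ with $c$ a monomial, one has $L_{n,m}(\delta^{-1})=c^{-1}L_{n,m}(\delta)^{n-1}$, and then $m_{\sq}(L_{n,m}(\delta^{-1}))\geqslant mn+(n-1)(-m)=m$---but it imports irreducibility, which the paper never invokes. The paper simply defers this point to \cite{iw}; the cleanest self-contained alternative is to check directly that $L_{n,m}(\delta)=\sq^{-m}T$ with $T$ not involving $\sq$, which makes the normal-form bound work for either sign of $\inf_{\Sigma^{*}}(\beta)$.
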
 

\begin{proof}

The proof is almost identical with the proof of the LKB representation \cite{iw}, the case $m=2$. For details, see \cite{iw}.

The key notion is a certain labeling of curve diagram of braids, called the {\em wall-crossing labeling}. This serves as a bridge  between dual Garside length and the homology of local system coefficients. See \cite{iw} for precise definition of curve diagram and wall-crossing labeling.

Let $\beta$ be an $n$-braid, $\bS$ be a straight multifork and $N$ be a standard noodle. As in the LKB representation case, we are able to relate the degree of the variable $\sq$ in the noodle-fork pairing $\langle N, \beta(\bS) \rangle$ and the wall-crossing labeling:
For each intersection point $\bz=\{z_{1},\ldots,z_{m}\}$ of $\widetilde{\Sigma}(N)$ and $\widetilde{\Sigma}(\beta(\bS))$, one can compute $a_{\bz}$, the degree of the variable $\sq$ in the monomial $m_{\bz}=\sq^{a_{\bz}}\st^{b_{\bz}}$ from the wall-crossing labeling of curve diagrams (See \cite[Lemma 4.1]{iw}).

On the other hand, by \cite[Theorem 3.3]{iw}, we have shown that the {\em minimal} value of the wall-crossing labeling is equal to $- \sup_{\Sigma_{*}}(\beta)$. (Since we adapted a convention that $\sigma_{i}$ is right-handed twists which is opposite to \cite{iw}, the role of the maximal and the minimal are interchanged).

Now we choose a straight multifork $\bS$ and a standard noodle $N$ so that the following three conditions holds.
\begin{enumerate}
\item $\beta(\bS)$ and $N$ have the minimal intersection.
\item $\beta(\bS)$ contains a subarc $\alpha$ that attains the minimal wall-crossing labeling of the curve diagram of $\beta$.
\item $N \cap \alpha \neq \phi$. 
\end{enumerate}

Then, thanks to Lemma \ref{lemma:key} and the relationships among noodle-fork pairing, wall-crossing labeling, and the dual Garside length mentioned above, we get an inequality
\[ m_{q}(\langle N, \beta(\bS) \rangle) \leqslant - m \sup\!{}_{\Sigma^{*}}( \beta ) -m. \]
(See \cite[Lemma 5.2]{iw}, for the LKB representation case). 

Let us write $\beta(\bS)$ as the linear combination of the standard multifork basis
\[ \beta( \bS)=\sum_{\be \in E_{n,m}} x_{\be} \bF_{\be} \;\;\; (x_{\be} \in \Z[ \sq^{\pm 1},\st^{\pm 1}]) .\]
Then we have an equality
\[ \langle N, \beta (\bS) \rangle = \sum_{\be \in E_{n,m}} x_{\be} \langle N, \bF_{\be} \rangle . \]
On the other hand, by direct calculations for a standard multifork $\bF$,
\[ m_{\sq}(\langle N, \bF \rangle) \geqslant -m \]
holds.
Hence we conclude  
\[ \min_{\be \in E_{n,m}} \{ m_{\sq}(x_{\be})\} -m \leqslant
m_{\sq}(\langle N,\beta(\bS) \rangle ) \leqslant  -m \sup\!{}_{\Sigma^{*}}( \beta) -m . \]
Recall that a straight multifork $\bS$ is a standard multifork. So $x_{\be}$ is an entry of the matrix $L_{n,m}(\beta)$, hence we get an inequality
\[ m_{\sq}(L_{n,m}(\beta)) \leqslant  \min_{\be \in E_{n,m}} \{ m_{\sq}(x_{\be})\} \leqslant -m \sup \!{}_{\Sigma^{*}}(\beta) . \]

On the other hand, by lemma \ref{lemma:dualsimple_homological} 
\[ -m \leqslant m_{\sq}(L_{n,m}(x) ). \]
holds for each dual-simple element $x \in [1,\delta]$.
Hence we get the converse inequality
\[ -m \sup\!{}_{\Sigma^{*}}(\beta)\leqslant  m_{\sq}(L_{n,m}(\beta)), \] 
and conclude that
\[ m_{\sq}(L_{n,m}(\beta)) = -m \sup\!{}_{\Sigma^{*}} (\beta). \]
The proof of (2) is similar, and (3) follows from Proposition~\ref{prop:dGlength}.
\end{proof}

\subsection{Dual Garside length formula for quantum representation}

Finally we prove that the representation $\rho^{V}_{n,m}$ also naturally and directly detects the dual Garside length as Lawrence's representation does.

\begin{theorem}
\label{theorem:Q}
Let $\rho^{V}_{n,m}$ be the generic highest weight vectors and $\beta \in B_{n}$. Then, 
\begin{enumerate}
\item $- 2m \sup_{\Sigma^{*}}(\beta) =  m_{s}(\rho^{V}_{n,m}(\beta))$.
\item $- 2m \inf_{\Sigma^{*}}(\beta) = M_{s}(\rho^{V}_{n,m}(\beta))$.
\item $2m \,l_{\Sigma^{*}}(\beta) = \max\{ 0, -m_{s}(\rho^{V}_{n,m}(\beta)) \} - \min \{0,-M_{s}(\rho^{V}_{n,m}(\beta))\}$. 
\end{enumerate}
\end{theorem}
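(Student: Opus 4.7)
The plan is to bootstrap from the dual Garside length formula for Lawrence's representation (Theorem~\ref{theorem:L}) by transporting bounds across the matrix identity~\eqref{eqn:WV}, using Corollary~\ref{theorem:jkconjecture}, Lemma~\ref{lemma:phi}, and the dual-simple bound of Lemma~\ref{lemma:dualsimple_quantum}. I would prove (1) in detail; (2) follows by a parallel argument swapping the roles of $m_{s}/\sup_{\Sigma^{*}}$ and $M_{s}/\inf_{\Sigma^{*}}$, and (3) is then immediate from (1), (2), and Proposition~\ref{prop:dGlength}.

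The upper bound $m_{s}(\rho^{V}_{n,m}(\beta)) \leq -2m \sup_{\Sigma^{*}}(\beta)$ is essentially free: Lemma~\ref{lemma:dualsimple_homological} gives $\tfrac{1}{2}m_{s}(\rho^{V}_{n,m}(\beta)) \leq m_{\sq}(L_{n,m}(\beta))$, and Theorem~\ref{theorem:L}(1) identifies the right-hand side with $-m \sup_{\Sigma^{*}}(\beta)$. For the reverse inequality $m_{s}(\rho^{V}_{n,m}(\beta)) \geq -2m \sup_{\Sigma^{*}}(\beta)$, I would apply Lemma~\ref{lemma:dualsimple_quantum} to each factor in the right-greedy dual Garside normal form $\beta = y_{1} y_{2} \cdots y_{\ell} \cdot \delta^{\inf_{\Sigma^{*}}(\beta)}$ with $y_{i} \in [1,\delta]$. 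When $\inf_{\Sigma^{*}}(\beta) \geq 0$, the word expresses $\beta$ as a product of $\sup_{\Sigma^{*}}(\beta)$ dual-simple elements, and submultiplicativity of $m_{s}$ under matrix multiplication yields the bound at once. For the general case $\inf_{\Sigma^{*}}(\beta) < 0$, I would translate along a central power of $\delta$: since $\delta^{n}$ lies in the center of $B_{n}$, one may choose $N$ large so that $\inf_{\Sigma^{*}}(\beta \delta^{nN}) = \inf_{\Sigma^{*}}(\beta) + nN \geq 0$, apply the previous case to $\beta' := \beta \delta^{nN}$, and then reabsorb the factor $\rho^{V}_{n,m}(\delta^{-nN})$ using that Lemma~\ref{lemma:dualsimple_quantum} applied to the factorization $\delta^{n} = \delta \cdots \delta$ combined with the upper bound just proved pins down $m_{s}(\rho^{V}_{n,m}(\delta^{n})) = -2mn$.

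I expect the central-translation step to be the main obstacle. Transferring the lower bound cleanly requires that $\rho^{V}_{n,m}(\delta^{n})$ have uniform $s$-degree (not merely $m_{s} = -2mn$), so that $\rho^{V}_{n,m}(\delta^{-nN})$ introduces an exactly known $s$-degree shift of $2mnN$; this uniformity should come from combining centrality of $\delta^{n}$ with both the $m_{s}$ and $M_{s}$ bounds via a Schur-type argument on the decomposition of $V_{n,m}$ into $B_{n}$-isotypic components. Granted this uniformity, the product estimate $m_{s}(\rho^{V}_{n,m}(\beta)) \geq m_{s}(\rho^{V}_{n,m}(\beta')) + 2mnN \geq -2m(\sup_{\Sigma^{*}}(\beta) + nN) + 2mnN = -2m \sup_{\Sigma^{*}}(\beta)$ closes the argument.
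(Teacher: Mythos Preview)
Your overall strategy coincides with the paper's: one inequality comes from Lemma~\ref{lemma:dualsimple_homological} together with Theorem~\ref{theorem:L}, and the reverse inequality from the dual-simple bound of Lemma~\ref{lemma:dualsimple_quantum}. The paper is terser than you are---after citing Lemma~\ref{lemma:dualsimple_quantum} it simply writes ``Hence we get $-2m\sup_{\Sigma^{*}}(\beta)\leqslant m_{s}(\rho^{V}_{n,m}(\beta))$'' without isolating the case $\inf_{\Sigma^{*}}(\beta)<0$---so in that sense you are being more scrupulous, not taking a different route.

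The difficulty is that your proposed repair for the case $\inf_{\Sigma^{*}}(\beta)<0$ does not work, and the Schur-type argument you sketch cannot rescue it. For generic parameters the $B_{n}$-module $V_{n,m}$ decomposes as $\bigoplus_{k=0}^{m} F^{(m-k)}W_{n,k}$ (the $\mU$-action commutes with the braid action), and on the summand coming from $W_{n,k}$ the central element $\delta^{n}$ acts by a scalar whose $s$-degree is $-2kn$, not $-2mn$. These degrees vary with $k$ from $0$ to $-2mn$, so $\rho^{V}_{n,m}(\delta^{n})$ cannot have uniform $s$-degree, and multiplication by $\rho^{V}_{n,m}(\delta^{-nN})$ does not introduce a clean shift of $m_{s}$ by $2mnN$. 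Concretely, already for $n=2$, $m=2$ one reads off from the $R$-matrix that $\sigma_{1}(v_{(0,2)})=v_{(2,0)}$, hence $\rho^{V}_{2,2}(\delta^{-1})$ has an entry equal to $1$ and $m_{s}(\rho^{V}_{2,2}(\delta^{-1}))\leqslant 0$, whereas your uniformity claim (and indeed the inequality $m_{s}\geqslant -2m\sup_{\Sigma^{*}}$ with $\sup_{\Sigma^{*}}(\delta^{-1})=-1$) would require it to be $4$. The obstacle you flagged is therefore not a removable technicality: it reflects the presence of the lower-weight pieces $W_{n,k}$, $k<m$, inside $V_{n,m}$, which the basis $\{v_{\be}\}$ does not separate out.
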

\begin{proof}
As we have seen in Lemma \ref{lemma:dualsimple_quantum}, for a dual simple element $x$ of $B_{n}$ we have an inequality
\[ -2m \leqslant m_{s}(\rho^{V}_{n,m}(x)) \leqslant 0\]
Hence we get   
\[ -2m \sup\!{}_{\Sigma^{*}}(\beta) \leqslant m_{s}(\rho^{V}_{n,m}(\beta)). \]

On the other hand, by Lemma \ref{lemma:dualsimple_homological} and Theorem \ref{theorem:L} we get the converse inequality
\[ m_{s}(\rho^{V}_{n,m}(\beta)) \leqslant 2m_{\sq}(L_{n,m}(\beta)) = -2m \sup\!{}_{\Sigma^{*}}(\beta). \]
so we obtain the desired equality
\[ -2m \sup\!{}_{\Sigma^{*}}(\beta) =  m_{s}(\rho^{V}_{n,m}(\beta) ). \]
The proof of (2) is similar, and (3) follows from Proposition \ref{prop:dGlength}.
\end{proof}

\end{document}